\newtheorem{theorem}{Theorem}[section]
\newtheorem{remark}{Remark}[section]
\newtheorem{corollary}[theorem]{Corollary}
\newtheorem{proposition}[theorem]{Proposition}
\newtheorem{lemma}[theorem]{Lemma}
\newtheorem{definition}[theorem]{Definition}
\begin{document}
\title [Methane Hydrates]{Advection of Methane in the Hydrate
  Zone: Model, Analysis and Examples}
\author {Malgorzata Peszynska}
\address {Department of Mathematics, Oregon State University, Corvallis, OR 97331}
\email {mpesz@math.oregonstate.edu} 
\author {Ralph E. Showalter}
\address {Department of Mathematics, Oregon State University, Corvallis, OR 97331}
\email {show@math.oregonstate.edu} 
\author {Justin T. Webster}
\address {Department of Mathematics, North Carolina State University, Raleigh, NC 27603
and College of Charlston, Charlston, SC 29424} 
\email {jtwebste@ncsu.edu} 
\date{\today}
\subjclass[1991]{Primary 47H20, 47H06, 76T05; Secondary 35Q35, 76S05}
\keywords{monotone evolution equations, compositional flow model, methane hydrates, general porous medium equation, phase change, constraints}

\begin {abstract} 
A two-phase two-component model is formulated for the
advective-diffusive transport of methane in liquid phase through
sediment with the accompanying formation and dissolution of methane
hydrate.  This free-boundary problem has a unique generalized solution
in $L^1$; the proof combines analysis of the stationary semilinear
elliptic Dirichlet problem with the nonlinear semigroup theory in
Banach space for an m-accretive multi-valued operator. Additional
estimates of maximum principle type are obtained, and these permit
appropriate maximal extensions of the phase-change relations.  An
example with pure advection indicates the limitations of these
estimates and of the model developed here. We also consider and
analyze the coupled pressure equation that determines the advective
flux in the transport model.
\end{abstract}
\maketitle

\section{Introduction}   \label{intro}

Methane hydrates are crystalline solid compounds consisting of methane
molecules encased in a cage of water molecules. These solids are
stable only at the combined low temperatures and high pressures found
in offshore continental slopes or permafrost regions. Methane hydrates
have been a subject of intense geophysics research for decades due to
their potential as energy sources or as hazards to climate or seafloor
stability \cite{NRuppel03,Sloan,TT04,KoreaHydrates}. The modeling of
methane hydrate formation and stability requires the use of
multi-phase flow models to quantify the exchange of components between
phases in combination with a thermodynamically consistent description
of the dynamic partitioning of these components
\cite{XuRuppel99,LF08,PTT10,PIMA11}. Moreover, the occurence of
hydrates is tied to the availability and type of advective pathways
and the associated permeability and porosity of the host medium
\cite{DD11,JungSantamarina,YunSantamarina,Tohidi2001,ChinaHydrates}. The
mathematical difficulties presented by any realistic model include
systems of partial differential equations with degeneracy and
multi-valued representations of phase change.

In \cite{GMPS14} we took a first step towards the analysis of
well-posedness of a simplified methane hydrate system. We assumed
geothermal and hydrostatic equilibrium, and thus no energy or pressure
equations were necessary. We considered diffusion as the only
transport mechanism, and represented the phase change using
variational inequalities or nonlinear complementarity constraints
which vary with depth. The theory developed in \cite{GMPS14} gives a
time-differentiable solution for which the evolution equation holds in
a space of distributions, $H^{-1}$, but the methods apply only to
self-adjoint (diffusive) form of transport. Furthermore, we defined a
fully implicit in time finite element scheme for the problem and
demonstrated that it converges at the same rate as a similar scheme
for Stefan free-boundary problem.

The system formulated and analyzed in this paper accounts for the
transport of methane by means of both fluid advection and diffusion,
and for the coupled pressure equation which gives advective flux. It
is motivated by observations of massive hydrate deposits which could
not have occured by diffusion only. Rather, a combination of advective
flux together with local biogenic production of methane is required
for the accumulation of such massive deposits over realistic time
scales \cite{TorresICGH,DD11}.  The necessity to include advection
motivated us to go beyond the earlier results of \cite{GMPS14} and
obtain additional estimates on the components of the solution.  In
this paper we shall obtain a solution shich is continuous with values
in the function space $L^1$; although smoother in the spatial
variable, it is formally less smooth in time.
Furthermore, in this paper we account for the pressure equation which is 
coupled to the methane transport model and can be formulated in
several variants. To model the coupled transport-pressure system we
use a staggered-in-time strategy.

This paper is organized as follows.  In Section~\ref{sec:pre} we
introduce some mathematical concepts and notation that will be used
thereafter. In Section~\ref{model} we describe the transport model
which, as in \cite{GMPS14}, is of compositional flow type with two
phases, solid and liquid, and two components, water and
methane. Additionally we describe the pressure equation coupled to the
transport model, which is not covered in \cite{GMPS14}. After some
simplifications, the transport model is a partial differential
equation whose solution is subject to constraints which vary with the
depth. These constraints appear as complementarity conditions or as
variational inequalities on the solution. In addition, we describe
variants of the pressure equation coupled to the model.

The simplified transport model is proven in Section~\ref{analysis} to
be well-posed and to satisfy a useful estimate of maximum principle
type. These results apply to a general class of semilinear
elliptic-parabolic partial differential equations in which the
nonlinearity may depend on the spatial variable.
Section~\ref{example} contains an explicit 1D example (without
diffusion) which indicates some limitations of the model by means of
the blow-up that results from non-homogeneous boundary flux conditions
or large initial data. In Section~\ref{sys} we discuss the
time-dependent system with the transport model coupled to the pressure
equation, whose solution and analysis relies on a loosely coupled
staggered in time scheme.  Section~\ref{sec:conclusions} contains
possible extensions of this work and some work underway.

\section{Notation and Preliminaries}
\label{sec:pre}

Here we introduce some notation and recall the theory that will be
used in the following.
First, we denote the extended real number system by $\Re_\infty
\equiv (-\infty,+ \infty]$.  An extended real-valued function
  $\varphi: \Re \to \Re_\infty$ is {\em convex} if
\ba
\label{eq:varphi}
\varphi(tu + (1-t) v) \le t \varphi(u) + (1-t) \varphi(v)
\text{ for } u,v \in \Re,\ 0 \le t \le 1.
\ea
It is {\em proper} if $\varphi(\xi) < \infty$ for some $\xi \in \Re$
and its {\em effective domain} is the set $\Dom (\varphi) = \{\xi \in
\Re: \varphi(\xi) < \infty \}$.
For such a function, the {\em subgradient} of $\varphi$ at $u \in \Dom(\varphi)$
is the set of all $u^* \in \Re$ such that 
$$u^*(v-u) \le \varphi(v) - \varphi(u) \text{ for all } v \in \Re,$$
and this set is denoted by $\partial \varphi(u)$.  The {\em maximal
monotone} graphs in $\Re \times \Re$ are characterized as the
subgradients of proper convex lower-semicontinuous functions on $\Re$.
These multi-valued relations extend the notion of a continuous
monotone function.  Related results hold in any Hilbert space, but we
shall not need that generality here
\cite{Brezis73,EkelandTemam99,Rockafellar68,Rockafellar69,Showalter97}.

As an example, we consider the set of pairs $x,\,y$ that are related by 
$$ y \le 0,\ x \ge 0,\ y\,x = 0.$$
This arises in the {\em complementarity problem}, a special {\em variational
inequality}, where $y = f - A(x)$ when $f$ and the function $A(\cdot)$ are 
given \cite{KindStam00,ItoKun08,Ulbrich11}.
If we let $I^+(\cdot)$ be the {\em indicator function} of the positive
real numbers, that is, $I^+(x) = 0$ if $x \ge 0$ and $I^+(x) = + \infty$ if
$x < 0$, then $I^+(\cdot)$ is a proper, convex and
lower-semicontinuous function, and the complementarity conditions are
equivalent to
$$ y (z - x) \le I^+(z) - I^+(x) \text{ for
all } z \in \Re\,.$$
This is the {\em subgradient} constraint
$ y \in \partial I^+(x)$
which characterizes the maximal monotone relation $\partial
I^+(\cdot)$ on $\Re \times \Re$.
It is approximated by the derivative $y =\tfrac{d}{dx} I^+_\lambda(x)$
of the regularized indicator function,
\begin{equation*}
I^+_\lambda(x) = \begin{cases} 0 \text{ if } x \ge 0, \\
                   \tfrac{x^2}{2 \lambda} \text{ if } x < 0,
\end{cases}
\text{ for which } \ 
\tfrac{d}{dx} I^+_\lambda(x) = \begin{cases} 0 \text{ if } x \ge 0, \\
                   \tfrac{x}{\lambda} \text{ if } x < 0.
\end{cases}
\end{equation*}
This special case will be used below.

We shall use below the {\em positive part} function, $x^+ =
\tfrac{1}{2}(x + |x|)$, the signum graph, $sgn(x) =
\{\tfrac{x}{|x|}\}$ for $x \neq 0$ and $sgn(0) = [-1,1]$, and the
subgradient of $x^+$, namely, $sgn^+ = \tfrac{1}{2}(1 + sgn(x))$.  We
denote by $sgn_0$ the corresponding (single-valued) function with
$sgn_0(0) = 0$ and similarly with $sgn^+_0$.  Finally, we denote the
gradient of a function $p(\cdot)$ by the (column) vector of partial
derivatives, $\grad p = (\partial_1 p, \dots \partial_N p)^T$ and the
divergence of the vector function $\q(\cdot) = (q_1, \dots q_N)^T$ by
$\div \q = \sum_{j=1}^N \partial_j q_j$.

\subsection*{Measurable-convex integrands}

Let $G$ be an open bounded domain in $\Re^N$. Assume that the
extended-real-valued function $\varphi(x,\xi)$ is a {\em
  measurable-convex integrand}:
\begin{itemize}
\item
for each $x \in G$, the function $\varphi(x,\cdot): \Re \to
\Re_{\infty}$ is proper, lower-semicontinuous and convex, and
\item
for each $\xi \in \Re$, the function $x \mapsto \varphi(x,\xi)$ is
measurable.
\end{itemize}
This notion was developed in \cite{Rockafellar68,Rockafellar69} and
applied in \cite{GMPS14}.

A useful regularization of such functions is the {\em Moreau-Yosida
approximation}: for $\lambda > 0$, set
\begin{equation}   \label{moreau}
\varphi_\lambda (x,r) = 
\inf_{t \in \Re} \{\tfrac{1}{2 \lambda} |r - t|^2 + \varphi(x,t)\}\,.
\end{equation}
Each of these has a derivative, 
$$\beta_\lambda(x,r) = \tfrac{\partial}{\partial r} \varphi_\lambda (x,r),
\ r \in \Re,$$
which is Lipschitz continuous on $\Re$ with constant $1/\lambda$,
and we have monotone convergence
\begin{equation*}
\lim_{\lambda \to 0^+} \varphi_\lambda (x,r) = \varphi(x,r), \  x \in G.
\end{equation*}
\noindent
For each $x \in G$, we denote the subgradient of
$\varphi(x,\cdot)$ by $\beta(x,\cdot) = \partial
\varphi(x,\cdot)$. Such a family of maximal monotone graphs will be
used to formulate our problem.

\subsection*{Accretive operators and initial-value problems}
\begin{definition}
An operator (relation) $\aA$ on a Banach space $X$ is {\em accretive}
if for $[x_j,y_j] \in \aA$, $j=1,2$ and $\lambda>0$, we have
	$$||x_1-x_2|| \le ||(x_1+\lambda y_1)-(x_2+\lambda y_2)||.$$
This is equivalent to requiring that $(I+\lambda \aA)^{-1}$ is a
contraction on $\Rg(I+\lambda \aA)$ for each $\lambda>0$.
An accretive operator $\aA$ is {\em m-accretive} on $X$ if
additionally the range condition $\Rg(I+\lambda \aA) = X$ holds for
every $\lambda > 0$.
\end{definition}

Consider now an m-accretive operator $\aA$ and an evolution equation
\begin{equation}  \label{ageneq}
u'(t) + \aA u(t) \ni F(t),\ \ 0 < t < T,\ u(0) = u_0 \,.
\end{equation}
The nonlinear semigroup generation theorem asserts that if $\aA$ is an
m-accretive operator on the Banach space $X$, the Cauchy problem
\eqref{ageneq} is well-posed \cite{CranEvans75,Evans77,Showalter97}.
It gives a solution which is minimally smooth in time.
\begin{definition}
An {\em $\varepsilon$-solution} of \eqref{ageneq} is a discretization 
\begin{equation}
\mathcal D \equiv \{ 0=t_0<t_1<...<t_N=T;~F_1,...,F_N \in X\}
\end{equation}
and a step function 
\begin{equation}  \label{step}
s(t) \equiv \begin{cases}s_0 & t=t_0 \\ s_j & t \in (t_{j-1},t_j]
\end{cases}
\end{equation} 
for which
\begin{gather*}
t_j-t_{j-1} \le \varepsilon~~\text{for}~~1 \le j \le N,
\\
\sum_{j=1}^N\int_{t_{j-1}}^{t_j}||F(t)-F_j|| dt <
	\varepsilon,~\text{and}
\\
\dfrac{s_j-s_{j-1}}{t_j-t_{j-1}}+\aA(s_j) \ni F_j,~~1 \le j \le N.
\end{gather*}
\end{definition}
The step function \eqref{step} provides a natural approximate solution to
\eqref{ageneq} by backward differences in time.
\begin{definition}
\label{def:C0}
A {\em $C^0$-solution} to \eqref{ageneq} is a function $u \in
C([a,b];X)$ such that for each $\varepsilon>0$ there is an
$\varepsilon$-solution $\mathcal D, s$ of \eqref{ageneq} with
	$$||u(t)-s(t)|| \le \varepsilon.$$
\end{definition}

The nonlinear semigroup theory \cite[p.228]{Showalter97} shows that
the Cauchy problem \eqref{ageneq} is well-posed with this notion
of solution.
\begin{theorem}\label{CL}
Let $\aA$ be $m$-accretive on a Banach space $X$. For each $u_0 \in
\overline{\Dom(\aA)}$ and $F \in L^1(0,T;X)$ there is a unique
$C^0$-solution of the Cauchy problem \eqref{ageneq}.
\end{theorem}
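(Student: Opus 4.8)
The plan is to follow the classical Crandall--Liggett construction, augmented by B\'enilan's treatment of the $L^1$ forcing term $F$, so that both existence and uniqueness flow purely from the accretivity of $\aA$ without any appeal to compactness. First I would verify that the implicit scheme defining an $\varepsilon$-solution is solvable. Given a discretization $\mathcal D$, the relation
\[
\frac{s_j - s_{j-1}}{t_j - t_{j-1}} + \aA(s_j) \ni F_j
\]
is equivalent to $s_j = (I + (t_j - t_{j-1})\aA)^{-1}\bigl(s_{j-1} + (t_j - t_{j-1}) F_j\bigr)$. Because $\aA$ is m-accretive, the range condition $\Rg(I + \lambda \aA) = X$ guarantees that each resolvent $(I+\lambda\aA)^{-1}$ is defined on all of $X$, while accretivity makes it a contraction; hence $s_j$ is determined uniquely and recursively from $s_0 = u_0$. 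Choosing each $F_j$ to be a local average of $F$ over $(t_{j-1},t_j]$, the defect condition $\sum_j \int_{t_{j-1}}^{t_j} \|F(t) - F_j\|\, dt < \varepsilon$ holds once $\varepsilon$ is small, by absolute continuity of the Lebesgue integral.

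The heart of the argument is an a priori comparison estimate between two $\varepsilon$-solutions. For discretizations $\mathcal D$ and $\widehat{\mathcal D}$ with step functions $s$ and $\widehat s$ built from nodal values $s_i$ and $\widehat s_j$, I would apply accretivity at each implicit step to bound $\|s_i - \widehat s_j\|$ by a convex combination of $\|s_{i-1} - \widehat s_j\|$ and $\|s_i - \widehat s_{j-1}\|$, plus error terms controlled by the two mesh sizes and the forcing defects. Solving this double-indexed difference inequality---the combinatorial core of the Crandall--Liggett method---yields a Cauchy estimate of the form
\[
\|s(t) - \widehat s(t)\| \le C\bigl(\sqrt{\varepsilon} + \text{(mesh and defect terms)}\bigr),
\]
uniform on $[0,T]$. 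Consequently, as $\varepsilon \to 0$ the family of $\varepsilon$-solutions forms a Cauchy net in $C([0,T];X)$, and its limit $u$ is the sought-after $C^0$-solution of Definition~\ref{def:C0}; the hypothesis $u_0 \in \overline{\Dom(\aA)}$ is exactly what allows the recursion to be started with vanishing initial defect.

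Uniqueness then follows from the same comparison estimate passed to the limit: any two $C^0$-solutions $u,\widehat u$ carrying data $(u_0,F)$ and $(\widehat u_0,\widehat F)$ satisfy the contraction inequality
\[
\|u(t) - \widehat u(t)\| \le \|u_0 - \widehat u_0\| + \int_0^t \|F(s) - \widehat F(s)\|\, ds,
\]
so identical data force $u = \widehat u$. The step I expect to be the main obstacle is precisely the double-indexed recursion producing the Cauchy estimate: controlling the accumulated mesh and forcing errors using only the contraction property of the resolvents, with no Hilbert-space structure or compactness to lean on, is what makes the construction succeed in a general Banach space and constitutes the genuine technical crux of the theorem.
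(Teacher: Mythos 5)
This is a classical generation theorem that the paper does not prove at all---it is quoted from the nonlinear semigroup literature (Showalter, p.~228; Crandall--Evans; B\'enilan)---and your outline is precisely the standard Crandall--Liggett/B\'enilan argument those cited sources carry out: resolvent solvability of the implicit Euler scheme, the double-indexed comparison recursion giving a uniform Cauchy estimate on $\varepsilon$-solutions, convergence in $C([0,T];X)$ using $u_0 \in \overline{\Dom(\mathcal{A})}$, and uniqueness from the limiting contraction inequality. Your plan is correct and takes essentially the same route as the proof the paper defers to; the combinatorial double-indexed estimate you flag as the crux is indeed the step that must be executed in full, and it is exactly what the cited references supply.
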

See \cite{Benilan72b,BenCranSacks88,CranLigg71,CranEvans75,Evans77,
  PeszShow98,ShowLitHorn96,LittShow95,show84b} for development and
applications of this theory to problems of structure similar to that
considered in this paper.

The objective in Section~\ref{analysis} is to transform the hydrate
transport model developed in Section~\ref{model} into a form to which
Theorem~\ref{CL} can be applied.

\section{The Model}  \label{model}
%
In this Section we describe the model for methane transport
as well as the coupled pressure equation.

The subseafloor region $G \subset \Re^3$ is a porous sediment of
porosity $\phi$ and permeability $\kappa$ through which the liquid
phase (brine) can flow. This liquid phase may have some methane gas
dissolved in it; the methane component is supplied by microbial
activity, or is supplied from much deeper earth layers.  If the amount
of methane attains a certain maximum amount for a given pressure and
temperature, methane comes out of the liquid solution in form of
either free gas or methane hydrate, and that form depends on the
pressure and temperature conditions. Methane hydrate, an ice-like
substance, forms in conditions of high {pressure} $p(x,t)$ and low
temperature $T(x)$, while free gas forms at higher temperatures or
lower pressures, or if there is not enough water available. It is the
formation of the hydrate and its possible dissociation in the hydrate
zone with abundance of water component that we wish to describe in
this paper. Inclusion of a free gas phase in the model is the subject
of ongoing work and will not be discussed here.

The phases within the pore system are {\em liquid} and {\em hydrate} 
indexed by subscripts $i = \l,\ h$.  {\em Phase saturation} is the
volume fraction $S_i(x,t)$ of phase $i$ present in the pores. Assume
there is no (free) gas phase present here, so these two phases fill
the pore space:
$ S_\l + S_h = 1\,.$
The components are {\em water} and {\em methane} indexed by
superscripts $j = W,\ M$.
The {\em density} of phase $i$ is $\rho_i = \rho_i^W + \rho_i^M,\ i =
\l,\,h$, where $\rho_i^j$ is the {\em mass concentration} of component
$j$ in phase $i$.
The corresponding 
{\em mass fractions} are $\chi_i^M = \frac{1}{\rho_i} \rho_i^M,\ 
                    \chi_i^W = \frac{1}{\rho_i} \rho_i^W,\ i = \l,\,h$,
so we have $0 \le \chi_i^j \le 1$ and $\chi_i^M + \chi_i^W = 1$.
Also we assume abundant water component $\chi_\l^W > 0$. 

\subsection{Transport model with phase constraints}
The {\em mass conservation} equation for the methane component takes
the form
\ba
\label{eq:mass}
\tfrac{\partial}{\partial t} (\phi S_\l \rho_\l \chi_\l^M 
                                  + \phi S_h \rho_h \chi_h^M)
+ \div \J_\l^M = f_M
\ea
in which the flux of the methane in the liquid 
has an {advective} and a diffusive part 
\ba \J_\l^M = \rho_\l \chi_\l^M \q  - \rho_\l D_\l^M \grad \chi_\l^M\,.
\ea
The flux $\q$ is the {\em Darcy velocity}. The molecular diffusion
term $- \rho_\l D_l^M \grad \chi_l^M$ arises from Fick's law, and the
diffusivity $D_{l}^M$ can be scaled as in \cite[2.2-20]{lake} with
porosity and liquid saturation, but will be simplified here by
assuming $D_l^M \equiv const$ as is done in \cite{DD11,GMPS14}.

\subsubsection*{State Equations}   
Additional conditions that are special to the situation studied here
include the following.  The liquid is {\em incompressible}: $\rho_\l =
constant$. 
We also assume that water phase is present everywhere, so $S_\l > 0$
and liquid pressure $p(x,t)$ is defined everywhere. {\em Salt} content of
the brine (liquid phase) $\chi_l^S$ and {\em temperature} $T(x)$ are
assumed to be known and constant in time. The temperature $T(x)$ is
assumed to be linearly increasing with depth, and $\chi_l^S \equiv
const$ is assumed to be that of seawater.
The content of the hydrate phase is fixed, so its properties
$ \rho_h^W,\ \rho_h,\ \rho_h^M,\ \chi_h^W ,\ \chi_h^M $
are all known constants.
Finally, we mention the need as in \cite{DD11} to distinguish between
different rock types of a given sediment by assigning to it a
categorical variable $r(x)$. 

The remaining $\chi_\l^M$ and $S_h = 1 - S_\l$ are essential unknowns.

\subsubsection*{Phase Equilibria} 
Let 
\bas
\chi^*(p(x,t),T(x),\chi_l^S(x,t),r(x))
\eas
denote the {\em maximal mass fraction} of methane that can be
dissolved in the liquid for the given pressure $p$, temperature $T$,
and salinity $\chi_l^S$ in sediment of rock type $r(x)$.  Typically
$\chi^*$ increases with temperature (thus with depth), has only mild
dependence on $p(x,t)$ and $\chi_l^S(x)$, but can vary substantially
between different rock types \cite{DD11}.  (Dependence
of $\chi^*$ on $p(x,t)$ is strong in the gas zone which is not
considered here). Assuming these are known, we see that the methane
{\em maximum solubility constraint} $\chi^*$ can be approximated 
as a function of $x$
\ba
\label{eq:chia}
\chi^*(p(x,t),T(x),\chi_l^S(x,t),r(x)) \approx \chi^*(x).
\ea

Now the hydrate is present only where the liquid is fully saturated, so
$\chi_\l^M = \chi^*(x)$ in the hydrate region.  That is, the
dissolved mass fraction takes the {\em maximal} value wherever $S_h > 0$:
$ S_h > 0$ implies $\chi_\l^M = \chi^*(x). $
Conversely, if the amount of methane does not attain $\chi^*(x)$, then
no hydrate can be present:
$\chi_\l^M < \chi^*(x)$ implies $S_h = 0.$
In summary, the hydrate saturation and liquid mass fraction of methane
satisfy the {\em complementarity constraints} \cite{BenGharbia}
\begin{equation}  \label{phase0} 
\begin{cases}
S_h \ge 0, 
\\
\chi^*(x)  - \chi_\l^M \ge 0,
\\
S_h\,\big( \chi^*(x) - \chi_\l^M \big) = 0.
\end{cases}
\end{equation}
To make the model physically meaningful, we need to have 
\ba
\label{eq:sath}
S_h \leq 1,\;\; \chi_l^M \ge 0.
\ea
Ensuring \eqref{eq:sath} is the crux of the analysis presented in
Section~\ref{analysis} and, as we show, is not always possible. Since
solutions violating \eqref{eq:sath} are unphysical, the question
arises of whether the model is therefore adequate, or whether the
analysis is lacking. These issues are addressed in
Section~\ref{sec:conclusions}.

\subsubsection*{The Transport Equation}
Now we introduce the choice of variables $S,\,\chi$, functions of the
point $x \in G$ and time $t > 0$:
\begin{equation*}
S \equiv S_h(x,t) = 1 - S_\l(x,t), \ \chi \equiv \chi_\l^M(x,t).
\end{equation*}
After division by $\rho_\l$, the mass conservation equation for
methane \eqref{eq:mass} is
\begin{subequations} \label{rs00}
\begin{equation}  \label{mas-con}
\tfrac{\partial}{\partial t} 
( \phi (1-S)  \chi + \phi S\,R )
+ \div ( \q \chi - D_\l^M \grad \chi ) 
=  \tfrac{1}{\rho_\l} f_M 
\end{equation}
with two unknowns $\chi$ and $S$, and where we have set
\ba
\label{eq:rdef}
R \eqdef \tfrac{\rho_h \chi_h^M}{\rho_\l}.
\ea
We can also define for future convenience the (dimensionless) total 
methane content per mass of liquid phase
\ba
\label{eq:udef}
u \eqdef  \phi (1-S)  \chi + \phi S\,R.
\ea

The two variables $\chi$ and $S$ are connected by the phase
equilibrium condition \eqref{phase0} written as a subgradient,
\begin{equation} \label{eq:constraint}
\chi \in \chi^*(x,p) + \partial I^+(S) \,,
\end{equation}
\end{subequations}
where $I^+(\cdot)$ denotes the indicator function of the positive real
numbers.  
For simplicity, we shall assume
\ba
\label{eq:phi-one}
\phi(x,t) =1, \; x \in G,
\ea
but we confirm in Remark~\ref{rem:phi-one} that this assumption is
unnecessary.

Since $S$ is a monotone relation in $\chi$, and since as is known in
practice \cite{LF08,GMPS14},
\ba
\label{eq:defu}
\chi \le \chi^*(x) < R,
\ea
the system \eqref{rs00} is a semi-linear {\em porous medium equation}
  \cite{vazquez07}
\begin{equation} \label{bs00}
\tfrac{\partial}{\partial t} \beta (x,\chi)
+ \div (\q \chi - D_\l^M \grad \chi ) \ni f,
\ x \in G,\ 0 < t < T,
\end{equation}
with advection and an $x$-dependent family of multi-valued monotone
graphs $\beta(x,\cdot)$. The equation \eqref{bs00} is similar to the
{\em Stefan problem}, but with advection and with $x$-dependence of
the constraints. In the Stefan problem the variable $u$ would play the
role of enthalpy, and $\chi$ would be temperature.
The model \eqref{rs00} occurs as equation (3) in \cite{DD11}, and as
part of the comprehensive models developed in \cite{LF08} where
$p(x,t),T(x,t),\chi_l^S(x,t)$ vary and are unknowns. 

The advection-free case of \eqref{rs00} in $\Re^N$ with $\q = \0$ was
analyzed in \cite{GMPS14} in the Hilbert space $H^{-1}(G)$, but the
analysis there depended on the symmetry of the linear elliptic
operator $-\div D_\l^M \grad$ and does not extend to the case $\q
\neq \0$.  The results of \cite{DibeShow81} formally may apply to give
existence of a solution of \eqref{rs00} in $H^{-1}(G)$ when the
elliptic part of \eqref{rs00} is coercive, and uniqueness if
additionally $\q=\0$. However, since the maximum estimate is not
available for these solutions, they have limited interest here.

The objectives in Section~\ref{analysis} are to analyze the
initial-boundary-value problem for the advection-diffusion system
\eqref{bs00} together with a maximum principle. In particular, by
\eqref{eq:udef}, the constraints \eqref{eq:sath} are equivalent to
\ba
\label{eq:satminmax}
0\leq u(x,t) \leq R,
\ea
and deriving estimates on the solution so that the physically
meaningful bound \eqref{eq:satminmax} holds, is a challenge addressed
in Section~\ref{analysis}.  In order to apply these abstract results
to \eqref{bs00}, we shall need to extend the relations $\beta
(x,\cdot)$ to a family of maximal monotone graphs
$\bar{\beta}(x,\cdot)$, and the estimates obtained below will in some
cases assure that our solution satisfies \eqref{eq:satminmax} and so
is independent of these extensions. Other cases require a more general
modeling framework in which the pressure equation is an important
component.

\subsection{The Pressure Equation}  
The {\em pressure} $p(x,t)$ and {\em Darcy velocity} $\q(x,t)$ of the
filtrating liquid are derived by summing mass conservation equations
for all components as in (\cite{lake}, Chapter 2). Since $\chi_l^S$ is
assumed constant, in our case this would be summing \eqref{eq:mass}
plus an equation for $\chi_l^W$. This leads to a simplified version of
the pressure equation in which we drop diffusion terms,
\ba
\tfrac{\partial}{\partial t} ( \phi (\rho_\l S_l + S_h \rho_h))
+ \div (\rho_\l \q) = 0.  \label{darcy_c}
\ea 
Further simplifying and assuming $\phi \approx const, \rho_l \approx
\rho_h$ as well as incompressibility gives 
\begin{subequations}   \label{darcy-e}
\begin{eqnarray}
\div \q = 0, 
\label{darcy_b} 
\end{eqnarray}
The problem is closed with Darcy's law
\ba
\label{eq:darcyf}
\tfrac{\mu}{\kappa} \q  = - (\grad p-\rho_\l\,\g)\,,
\ea
\end{subequations}
where $\g = -\e_3 g$ is the gravity vector.

Superficially, it appears that the coupling between the transport
equation \eqref{eq:mass} and the Darcy flow \eqref{darcy-e} is one way
only in this model due to the simplified form of the pressure equation
and due to \eqref{eq:chia}. A more comprehensive version of pressure
equation such as \eqref{darcy_c} would yield two-way coupling, and may
involve further nonlinearities if, e.g., the dependence of porosity
$\phi$ on the pressure is known, or is modeled by geomechanics
coupling.

More generally, the porosity or permeability may vary with time due to
the deposition of hydrate, $\phi(x,t) = \phi(p(x,t))$ and $\kappa(x,t) =
\kappa(x,S(x,t))$ in the pressure equation \eqref{eq:darcyf}, and the liquid
pressure $p$ and Darcy velocity $\q$ are likewise time-dependent. This
general case may also be included in the more general evolution
pressure equation \eqref{darcy_c}.

\subsubsection*{Hydrostatic pressure and excess pressure}

In \cite{GMPS14,PTT10} we assumed that pressure is hydrostatic, that
is, that the right side of \eqref{eq:darcyf} vanishes and,
consequently, pressure increases linearly with depth according to
hydrostatic gradient, and $\q=\0$. In order to account for nonzero
flux $\q$, we solve \eqref{darcy-e}, but decompose $p(x,t)$ further into
its hydrostatic part $p^0(x)$ and excess pressure $p^*(x,t)$.

The {\em hydrostatic pressure} $p^0(x)$ is
determined by depth 
\begin{subequations}   \label{darcy0}
\ba
\q^0 &=& \0,\\ 
\grad p^0 &=& \rho_\l\,\g\,.
\ea
\end{subequations}
Then the {\em excess pressure} $p^*(x,t)$ associated with
$\q$ satisfies
\begin{subequations}   \label{darcy*}
\ba
\div \q &=& 0,  
\\
\mu \kappa^{-1} \q &=&- \grad p^*.  
\ea
\end{subequations}
The flux $\q(x,t)$ is determined by either the total pressure from
\eqref{darcy-e} or the excess pressure from \eqref{darcy*}. This
decomposition is useful in numerical approximation.

In particular, for a slightly compressible medium, a pressure-porosity
relation $\phi(x,t) = \Phi^*(x,p^*(x,t))$ is determined by the local mechanics
of the medium, but 
\ba
\label{eq:ksat}
\kappa=\kappa(S) 
\ea
is, in general, not known exactly (see \cite{LF08} for some algebraic
approximate formulas). In fact, \eqref{eq:ksat} may be extended by
pressure-stress dependence as well. 

\section{Analysis of Transport Model}  
\label{analysis}

In this section we shall obtain existence-uniqueness of an
$L^1$-solution and maximum estimates for an initial-value problem for
the semilinear equation \eqref{bs00} with (homogeneous) Dirichlet
boundary conditions. These results are obtained for a problem in which
the graphs $\beta(x,\cdot)$ have been extended to maximal monotone
graphs $\bar{\beta}(x,\cdot)$ which agree with $\beta(x,\cdot)$ on the
set of interest.

The general plan is to apply Theorem~\ref{CL} to an abstract version
of \eqref{bs00}.  To do so, in Section~\ref{sec:elliptic} we make
precise the elliptic operator $A_1$ needed in \eqref{bs00} and its
properties on $L^1(G)$. Next we construct the operator $\aA = A \circ
\bar{\beta}^{-1}(x,\cdot)$ from a general operator $A$ of which the
elliptic operator $A_1$ is an example. Here $\bar{\beta}$ has to be
maximal monotone, and for our application it is $x$-dependent. We
handle the $x$-dependent case by the methods of \cite{BrezisStrauss73}
and in Section~\ref{sec:stationary} we supplement these results to
resolve the stationary problem that results from a backward-difference
approximation of \eqref{bs00}, namely,
\ba
\label{eq:stationary}
\bar{\beta}(x,v(x))+ Av(x) \ni f(x).
\ea
These results show that $\aA$ is m-accretive, and we describe
comparison and maximum estimates for the stationary problem. In
Section~\ref{sec:evolution} we put together the properties of $\aA$
and the abstract nonlinear semigroup theory theory from
Section~\ref{sec:pre} to conclude well-posedness of \eqref{bs00} with
the extension $\bar{\beta}$ of $\beta$. Related estimates are
formulated for the evolution problem.

The use of $\bar{\beta}(x,\cdot)$ in \eqref{bs00} instead of
$\beta(x,\cdot)$, which is not maximal, requires some a-priori
assumptions on the solution. The comparison and maximum principles
show where such a-priori conditions can be eliminated, as they are
consequences of the data.  The final result of this Section is
Proposition~\ref{prop:evolution} which applies the abstract results to
obtain well-posedness of \eqref{bs00}. In Section~\ref{example} we
provide explicit examples where the a-priori conditions can and cannot
be eliminated.

\subsection{Elliptic operator $A_1$ and its properties}
\label{sec:elliptic}

Define the usual continuous bilinear form on
the Sobolev space $V = H^1_0(G)$ corresponding to an
advection-diffusion-reaction problem
\begin{equation} \label{form}
\A v(\psi) = 
\sum_{i,j=1}^N  \int_G a_{ij}(x)\partial_i v \partial_j \psi \,dx
         - \sum_{j=1}^N \int_G q_j(x) v \partial_j \psi \,dx + \int_G a(x) v \psi \,dx,
         \ v,\psi \in V.
\end{equation}
Assume the coefficients
$a_{ij},\ q_j \in C^1(\overline{G}),\ a \in L^\infty(G)$ satisfy
\begin{equation}
\label{eq:assum}
a(x) \ge 0,\ 2 a(x) + \partial_j q_j \ge 0,
\ a_{ij}(x) \xi_i \xi_j \ge c_0 |\xi|^2, \ x \in G,\ \xi \in \Re^N.
\end{equation}
\begin{remark}
\label{rem:bs00}
In \eqref{bs00} we have $a\equiv 0,\nabla \cdot \q=0$, and $a_{ij}=D_l^M\delta_{ij}$ hence \eqref{eq:assum} holds. 
\end{remark}
Now define 
\begin{equation}
\label{eq:A1}
A_1 v = - \sum_{i,j=1}^N \partial_j(a_{ij}\partial_i v) 
+ \sum_{j=1}^N \partial_j(q_j v) + a v.
\end{equation}
with 
$\Dom(A_1) \equiv \{v \in W^{1,1}_0(G): A_1 v \in L^1(G)\}$ where
$A_1 v = f \in L^1(G)$ corresponds to the Dirichlet problem
\begin{equation*}
v \in W^{1,1}_0(G): \A v(\psi) = \int_G f\psi\,dx, \quad \psi \in C_0^\infty(G)\,.
\end{equation*}
Note that \eqref{form} is well defined for $(v,\,\psi) \in  
(W^{1,1}_0(G), C_0^\infty(G))$ and determines $A_1v$. 
Brezis and Strauss \cite{BrezisStrauss73} showed that the operator
$A_1$ has the following properties: 
\begin{proposition}\label{prop:BS} \cite{BrezisStrauss73}
The linear operator $A_1$ is the $L^1(G)$-closure of the
restriction $\A: H^1_0(G) \to L^2(G) \subset H^1_0(G)'$ and it satisfies
\begin{enumerate}[(A)]
\item $\Dom(A_1)$ is dense in $L^1(G)$ and $(I + \lambda A_1)^{-1}$ is
a contraction for each $\lambda > 0$.
\item $\Dom(A_1) \subset W^{1,p}_0(G)$ for any $p$: $1 \le p < N/(N-1)$ and
there is a $c(p) > 0$ such that $c(p) \|v\|_{W_0^{1,p}} \le
\|A_1(v)\|_{L^1}$ for $v \in \Dom(A_1)$.
\item $\sup_G (I + \lambda A_1)^{-1}f \le \max\{0,\sup_G f\}$ for each
$f \in L^1(G)$.
\end{enumerate}
\end{proposition}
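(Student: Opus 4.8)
The plan is to develop the $L^1$-theory of $A_1$ on top of the standard Hilbert-space theory for the form $\A$, following Brezis--Strauss \cite{BrezisStrauss73}. First I would verify that \eqref{eq:assum} makes $\A$ continuous and coercive on $V=H^1_0(G)$: choosing $\psi=v$ and integrating the advection term by parts rewrites $-\sum_j\int_G q_j v\,\partial_j v$ as $\tfrac12\sum_j\int_G(\partial_j q_j)\,v^2$, so that the lower-order contribution is $\int_G(a+\tfrac12\partial_j q_j)\,v^2\ge0$ by \eqref{eq:assum}, while the principal part dominates $c_0\|\grad v\|_{L^2}^2$; Poincar\'e's inequality then gives $\A v(v)\ge c\|v\|_V^2$. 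By Lax--Milgram the equation $v+\lambda\A v=f$ has a unique solution $v\in V$ for every $f\in H^{-1}(G)$, which in particular defines a resolvent on $L^2(G)$.

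To obtain the $L^1$-contraction in (A), I would first prove it for $L^2$-data and then pass to the closure. If $v,\hat v\in V$ solve the resolvent equation with right-hand sides $f,\hat f\in L^2(G)$, then $w=v-\hat v$ satisfies $w+\lambda\A w=f-\hat f$; testing with a smooth nondecreasing approximation $sgn_\varepsilon$ of the signum graph, the principal part yields $\int_G sgn_\varepsilon'(w)\sum_{i,j}a_{ij}\partial_i w\,\partial_j w\ge0$ because $sgn_\varepsilon'\ge0$ and $(a_{ij})$ is positive definite, while the lower-order terms are controlled exactly as in the coercivity computation. Letting $\varepsilon\to0^+$ gives $\|w\|_{L^1}\le\|f-\hat f\|_{L^1}$. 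Defining $A_1$ as the $L^1$-closure of this $L^2$-restriction, the uniform contraction estimate together with the density of $L^2(G)$ in $L^1(G)$ shows that $(I+\lambda A_1)^{-1}$ extends to a contraction defined on all of $L^1(G)$, so the range condition holds; density of $\Dom(A_1)$ follows since it contains $(I+\lambda A_1)^{-1}(L^2(G))$ and $(I+\lambda A_1)^{-1}g\to g$ in $L^1(G)$ as $\lambda\to0^+$. For the maximum estimate (C) I would set $v=(I+\lambda A_1)^{-1}f$ and $M=\max\{0,\sup_G f\}$ and compare $v$ with the constant supersolution $M$, equivalently testing the resolvent equation with $(v-M)^+\in H^1_0(G)$: on $\{v>M\}$ the principal part is nonnegative by ellipticity and the remaining lower-order terms are controlled by the sign conditions in \eqref{eq:assum}, and since $f\le M$ there one is forced to $\int_G\big((v-M)^+\big)^2\le0$, hence $v\le M$ a.e.; as before this is carried out for $L^2$-data and transferred to $L^1(G)$ through the closure.

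The deepest step, and the one I expect to be the main obstacle, is the regularity statement (B). The bound $c(p)\|v\|_{W^{1,p}_0}\le\|A_1 v\|_{L^1}$ for $1\le p<N/(N-1)$ is proved by duality: to estimate $\int_G\partial_k v\,\psi$ for an arbitrary $\psi\in L^{p'}(G)$ with conjugate exponent $p'>N$, one solves the adjoint equation $\A^*\phi=\partial_k\psi$, whose right-hand side lies in $W^{-1,p'}(G)$, and invokes the Stampacchia--De Giorgi $L^\infty$ a priori estimate $\|\phi\|_{L^\infty}\le C\|\psi\|_{L^{p'}}$, which is available precisely because $p'>N$. Transposition then gives $\int_G\partial_k v\,\psi=\langle A_1 v,\phi\rangle\le\|A_1 v\|_{L^1}\|\phi\|_{L^\infty}$, and taking the supremum over $\psi$ yields the claim with $p=p'/(p'-1)<N/(N-1)$. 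The genuine difficulty is thus imported from the $L^\infty$ bound for the adjoint operator (which shares the hypotheses \eqref{eq:assum}) and from justifying the transposition for $v\in\Dom(A_1)\subset W^{1,1}_0(G)$; once this is in place, the closure, density, and truncation arguments above are routine.
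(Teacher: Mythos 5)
First, a point of comparison: the paper offers no proof of Proposition~\ref{prop:BS} at all --- it is quoted from Brezis--Strauss \cite{BrezisStrauss73} --- so there is no internal argument to measure yours against. Your reconstruction does follow the standard Brezis--Strauss route: Lax--Milgram for the $L^2$-resolvent after the integration-by-parts coercivity computation, testing with smoothed signum functions for the $L^1$-contraction in (A), truncation at the level $M=\max\{0,\sup_G f\}$ for (C), and duality against the adjoint problem with the Stampacchia $L^\infty$ estimate for (B). Parts (A) and (B) are sound in outline (for the density in (A) it is even quicker to observe that $C_0^\infty(G)\subset\Dom(A_1)$ since $a_{ij},q_j\in C^1(\overline G)$, which avoids having to justify $(I+\lambda A_1)^{-1}g\to g$), and you correctly identify the adjoint $L^\infty$ bound and the justification of the transposition as the real content of (B).

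There is, however, a genuine gap in (C). Testing the resolvent equation with $w=(v-M)^+$ and writing $v=w+M$ on $\{v>M\}$, the lower-order terms of $\A v(w)$ contribute
\[
\int_G\bigl(a+\tfrac12\,\div\q\bigr)\,w^2\,dx \;+\; M\int_G\bigl(a+\div\q\bigr)\,w\,dx .
\]
The first integral is nonnegative by \eqref{eq:assum}, but the second is not controlled by \eqref{eq:assum}: from $a\ge0$ and $2a+\div\q\ge0$ one only gets $a+\div\q\ge-a$, so for $M>0$ this term can be strictly negative and the desired conclusion $\|w\|_{L^2}^2\le0$ does not follow. This is not an artifact of the test function: what (C) really requires is that nonnegative constants be supersolutions, i.e.\ $A_1M=M\,(a+\div\q)\ge0$, and one can check (e.g.\ in one dimension with $a=c>0$, $q=-2cx$, which satisfies \eqref{eq:assum}) that the interior limit of the resolvent equation then pushes $v$ up toward $2M$. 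So you should either add the hypothesis $a+\div\q\ge0$ for part (C), or note that in the only use the paper makes of the proposition one has $a\equiv0$ and $\div\q=0$ by Remark~\ref{rem:bs00}, so the offending term vanishes identically and your argument closes.
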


These properties of operator $A_1$ are used in \cite{BrezisStrauss73}
to study the stationary problem of structure similar to
\eqref{eq:stationary}. In fact, remarks in \cite{BrezisStrauss73}
cover the $x$-dependent case but require
\ba
\label{eq:meas}
\mathrm{measurability\  of\ the\ resolvents}\ 
(I+\bar{\beta}(x,\cdot))^{-1}
\ea
which is cumbersome to verify for our problem \eqref{bs00}.

In what follows we will use (A), (C), and (B) for $p=1$ of
Proposition~\ref{prop:BS}, and handle the $x$-dependence of $\bar{\beta}$
differently than in \cite{BrezisStrauss73}.

\subsection{The Stationary Problem}
\label{sec:stationary}

We will show now that the proof from \cite{BrezisStrauss73} concerning
the stationary problem \eqref{eq:stationary} for the case of a single
maximal monotone $\bar{\beta}(\xi) = \partial \varphi(\xi)$ and an
abstract operator $A$ extends to the $x$-dependent case without
\eqref{eq:meas} but under some additional assumptions which place the
measurability hypotheses directly on the $\varphi(x,\xi)$ instead of
on the resolvent.  This facilitates checking the hypotheses and allows
the application of the result to \eqref{bs00}. We also prove an
estimate of maximum principle type which is useful later in the
analysis of the evolution problem. The maximum estimate obtained
below bounds not only the values of $u$ but also those of $\chi$. The
results are put together in Theorem~\ref{BS-T1} and its corollaries below.

We start by providing the construction of $\bar{\beta}$ as a
subgradient of $\varphi(x,\cdot)$. For our purposes,
$\varphi(x,\cdot)$ has the domain $\Re$ for each $x \in G$.

\begin{definition}
Assume that
$\varphi(x,\xi)$ is a {\em measurable-convex integrand} with each
$\varphi(x,\xi) \in [0,+\infty)$ and $\varphi(x,0) = 0$.  For each $x
  \in G$, denote the subgradient of $\varphi(x,\cdot)$ by
\ba
\label{eq:beta}
\bar{\beta}(x,\cdot) = \partial \varphi(x,\cdot).
\ea
\end{definition}

\begin{theorem} \label{BS-T1}
Let $\bar{\beta}(x,\cdot)$ be given as in \eqref{eq:beta}. Assume additionally
that
\begin{equation} \label{mc-est}
M_C(x)=\sup\{|u|:~u \in \bar{\beta}(x,v),~|v|\le C\} \in L^2(G)
\text{ for each } C>0.
\end{equation}
Let the linear operator $A: \Dom(A) \to L^1(G)$ satisfy the following:
\begin{enumerate}[(a)]
\item $\Dom(A)$ is dense and $(I+ \lambda A)^{-1}$ is a contraction on
$L^1(G)$ for each $\lambda > 0$;
\item  There is a $c > 0$ such
  that $c \|v\|_{L^1} \le \|Av\|_{L^1}$ for $v \in \Dom(A)$.
\item  $\sup_G (I + \lambda A)^{-1}f \le (\sup_G f)^+$ for each
$f \in L^1(G)$ and $\lambda > 0$;
\end{enumerate}
Then for each $f \in L^1(G)$ there is a unique solution $v \in
\Dom(A),\ u \in L^1(G)$ to the stationary problem
\begin{equation} \label{semi}
u + Av = f \text{ and } u(x) \in \bar{\beta}(x,v(x))\,, 
\text{ a.e. } x \in G\,.
\end{equation}
In addition, if $u_1,v_1$ and $u_2,v_2$ are solutions
corresponding to $f_1, f_2$, then the comparison estimates
\begin{equation} \label{L1-order}
\|(u_1 - u_2)^+\|_{L^1} \le \|(f_1 - f_2)^+\|_{L^1},\ 
\|(u_1 - u_2)^-\|_{L^1} \le \|(f_1 - f_2)^-\|_{L^1},
\end{equation}
hold, and, consequently 
\begin{equation} \label{L1-est}
\|u_1 - u_2\|_{L^1} \le \|f_1 - f_2\|_{L^1},
\end{equation}
i.e., the map $f \mapsto u$ is a contraction on $L^1(G)$.
\end{theorem}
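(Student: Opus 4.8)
The plan is to realize the stationary problem \eqref{semi} as the range condition $\Rg(I+\aA)=L^1(G)$ for the operator $\aA=A\circ\bar\beta(x,\cdot)^{-1}$, following Brezis--Strauss \cite{BrezisStrauss73} but replacing the resolvent-measurability hypothesis \eqref{eq:meas} by the measurable-convex integrand structure together with the growth bound \eqref{mc-est}. Accordingly I would split the argument along the two halves of $m$-accretivity: first the comparison and contraction estimates \eqref{L1-order}--\eqref{L1-est}, which subsume accretivity and hence uniqueness, and then existence via a Moreau--Yosida regularization and a passage to the limit.

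For the estimates, given two solutions $u_i+Av_i=f_i$ with $u_i(x)\in\bar\beta(x,v_i(x))$, I would subtract to obtain $(u_1-u_2)+A(v_1-v_2)=f_1-f_2$ and test against a smooth nondecreasing approximation $p_\delta$ of $sgn_0^+$ evaluated at $v_1-v_2$. Property (c) is equivalent to the $T$-accretivity of $A$, which yields $\int_G A(v_1-v_2)\,p_\delta(v_1-v_2)\,dx\ge 0$ in the limit $\delta\to0$; monotonicity of the graph $\bar\beta(x,\cdot)$ forces $u_1-u_2$ and $v_1-v_2$ to share sign, so that the reaction term controls $\|(u_1-u_2)^+\|_{L^1}$ from below. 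Combining these gives the first inequality in \eqref{L1-order}, the second being symmetric, and \eqref{L1-est} follows by addition. The delicate point is the contribution of the vertical segments of $\bar\beta(x,\cdot)$, i.e. the set $\{v_1=v_2\}$ on which $u_1-u_2$ need not vanish; this is handled by the standard Brezis--Strauss/B\'enilan limiting argument \cite{BrezisStrauss73,Benilan72b} in which $p_\delta$ is chosen to capture exactly the mass carried on $\{u_1>u_2\}$.

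For existence I would fix $f\in L^\infty(G)$ and replace $\bar\beta(x,\cdot)$ by its Moreau--Yosida approximation $\beta_\lambda(x,\cdot)=\partial_r\varphi_\lambda(x,\cdot)$ from \eqref{moreau}, which is Lipschitz in $r$ with constant $1/\lambda$ and satisfies $\beta_\lambda(x,0)=0$. Since $v\mapsto\beta_\lambda(x,v(x))$ is then an everywhere-defined, accretive, Lipschitz Nemytskii map on $L^1(G)$, the perturbed operator $A+\beta_\lambda$ is $m$-accretive by (a); combined with the coercivity (b), surjectivity yields a unique $v_\lambda\in\Dom(A)$ solving $\beta_\lambda(x,v_\lambda)+Av_\lambda=f$, with $u_\lambda:=\beta_\lambda(x,v_\lambda)$. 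Testing with $sgn_0(v_\lambda)$ and invoking the $L^1$-accretivity of $A$ from (a) gives the uniform bound $\|u_\lambda\|_{L^1}\le\|f\|_{L^1}$, hence $\|Av_\lambda\|_{L^1}\le 2\|f\|_{L^1}$ and, by (b), a uniform $L^1$ bound on $v_\lambda$. A maximum estimate built from property (c) then provides a uniform $L^\infty$ bound $\|v_\lambda\|_\infty\le C$; writing $J^x_\lambda=(I+\lambda\bar\beta(x,\cdot))^{-1}$, we have $u_\lambda(x)\in\bar\beta(x,J^x_\lambda v_\lambda(x))$ with $|J^x_\lambda v_\lambda|\le|v_\lambda|\le C$, so hypothesis \eqref{mc-est} upgrades this to the uniform integrability bound $|u_\lambda(x)|\le M_C(x)\in L^2(G)$.

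Finally I would pass to the limit $\lambda\to0^+$. Either the monotone dependence coming from $\varphi_\lambda\uparrow\varphi$ or the compactness afforded by the $L^2$ bound yields a.e. limits $v_\lambda\to v$, $u_\lambda\to u$, and the $L^2$ bound from \eqref{mc-est} furnishes the uniform integrability that promotes these to strong $L^1$ convergence (Vitali); then $Av_\lambda=f-u_\lambda\to f-u$ in $L^1$ and the $L^1$-closedness of $A$ give $v\in\Dom(A)$ with $u+Av=f$. The inclusion $u(x)\in\bar\beta(x,v(x))$ a.e. follows from $u_\lambda(x)\in\bar\beta(x,J^x_\lambda v_\lambda(x))$, from $J^x_\lambda v_\lambda\to v$ a.e. (since $|v_\lambda-J^x_\lambda v_\lambda|=\lambda|u_\lambda|\to0$), and from closedness of the maximal monotone graph $\bar\beta(x,\cdot)$. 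Passage from $f\in L^\infty(G)$ to general $f\in L^1(G)$ is then immediate from the contraction \eqref{L1-est} and density, while uniqueness is \eqref{L1-est} itself. I expect the principal obstacle to be precisely this $\lambda\to0$ limit in the $x$-dependent multivalued setting: one must secure enough integrability and compactness --- supplied here by \eqref{mc-est} in place of the awkward resolvent-measurability \eqref{eq:meas} --- and then identify the limiting inclusion $u\in\bar\beta(x,v)$ almost everywhere, measurability included, without the single-graph tools available to Brezis--Strauss.
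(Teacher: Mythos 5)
Your overall architecture (contraction estimates first, then existence by Moreau--Yosida regularization and a $\lambda\to 0$ limit controlled by \eqref{mc-est}) matches the paper's, but there is a concrete gap at the one step where the theorem's new hypothesis must actually be used. You regularize only $\bar\beta$ and solve $\beta_\lambda(\cdot,v_\lambda)+Av_\lambda=f$, and you then assert that ``a maximum estimate built from property (c) provides a uniform $L^\infty$ bound $\|v_\lambda\|_\infty\le C$.'' Property (c) does not yield this. Writing $v_\lambda=(I+\mu A)^{-1}\bigl(v_\lambda+\mu f-\mu\beta_\lambda(\cdot,v_\lambda)\bigr)$ and applying (c) gives only $\sup v_\lambda\le \bigl(\sup v_\lambda+\mu\|f\|_\infty\bigr)^+$, which is vacuous: since $\beta_\lambda(x,\cdot)$ may vanish identically on a half-line (as it does for the Yosida approximation of $\partial I^+$, precisely the graph of interest here), there is no coercivity in $\bar\beta$ to close the estimate, and hypothesis (b) supplies only an $L^1$ bound on $v_\lambda$, not a pointwise one. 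Without $|v_\lambda(x)|\le C$ a.e.\ you cannot invoke \eqref{mc-est} to dominate $u_\lambda=\beta_\lambda(\cdot,v_\lambda)$ by $M_C\in L^2(G)$, and the entire passage to the limit --- the uniform integrability, the identification of $u\in\bar\beta(x,v)$ --- loses its foundation. The paper's fix is to regularize $A$ as well: it solves $\eps v_\lambda+Av_\lambda+\beta_\lambda(\cdot,v_\lambda)=f_\eps$ with $f_\eps\in L^1\cap L^\infty$, rewritten as the fixed-point equation \eqref{eq:ss}, whose right-hand side is a strict contraction in $L^1\cap L^\infty$ with factor $(1+\lambda\eps)^{-1}$; this simultaneously produces $v_\lambda$ and the bound $\|v_\lambda\|_{L^1\cap L^\infty}\le\tfrac1\eps\|f_\eps\|$, which is exactly what feeds \eqref{mc-est}. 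The price is an extra limit $\eps\to 0$ at the end, taken after $\lambda\to 0$ and controlled by the $L^1$ contraction \eqref{L1-est}; your proof needs this extra layer.

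Two smaller points. For the comparison estimates you test with a smooth approximation $p_\delta(v_1-v_2)$ and defer the contribution of the set $\{v_1=v_2\}$, where $(u_1-u_2)^+$ may carry mass, to ``the standard limiting argument''; the paper avoids this difficulty outright by testing with $\sigma=sgn_0^+\bigl((u_1-u_2)+(v_1-v_2)\bigr)$, which by monotonicity of $\bar\beta(x,\cdot)$ lies simultaneously in $sgn^+(v_1-v_2)$ (so Lemma~\ref{key-est} applies, since $sgn^+$ is $x$-independent) and in $sgn^+(u_1-u_2)$ (so the zero-order term is exactly $\|(u_1-u_2)^+\|_{L^1}$). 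You should adopt that test function rather than gesture at a B\'enilan-type limit. Finally, an $L^2$ bound on $u_\lambda$ gives weak compactness, not the a.e.\ convergence your ``closedness of the graph'' argument requires; the paper obtains strong $L^2$ limits of $v_\lambda$ and $u_\lambda$ by the Crandall--Pazy resolvent-convergence argument before identifying the limit inclusion.
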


The proof of this Theorem follows a sequence of steps. First, we
recall the following result from \cite{BrezisStrauss73} which provides
key estimates there and below.  Such a result holds only for a single
convex function.
\begin{lemma} \label{key-est} (\cite{BrezisStrauss73}, Lemma 2; 
Prop. II.9.3 in \cite{Showalter97}).  Let the operator $A$ satisfy the
conditions (a), (c) in Theorem~\ref{BS-T1}, and assume the function
$\varphi: \Re \to [0,+\infty]$ is proper, convex and lower
semicontinuous with $\varphi(0) = 0$. Then for each pair $v \in
L^p(G),\ u \in L^{p'}(G),\ Av \in L^p(G),$ and $u(x) \in \partial
\varphi(v(x)) \text{ a.e. } x \in G$, with $p \ge 1$, we have
\begin{equation*}  \label{angle}
\int_G Av(x) u(x)\,dx \ge 0.
\end{equation*}
\end{lemma}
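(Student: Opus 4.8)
The conclusion is the \emph{bracket} (or \emph{angle}) inequality $\int_G Av\,u\,dx\ge0$ for a measurable section $u(x)\in\partial\varphi(v(x))$, and the plan is to prove it first for a smooth monotone nonlinearity and then pass to the general maximal monotone graph. Since $\varphi\ge0$ and $\varphi(0)=0$, the origin is a global minimizer, so $0\in\partial\varphi(0)$. I would therefore first establish the inequality with $u$ replaced by $\gamma(v)$, where $\gamma$ is a nondecreasing Lipschitz function with $\gamma(0)=0$ --- the natural candidates being the Moreau--Yosida derivatives $\beta_\lambda(\cdot)=\tfrac{\partial}{\partial r}\varphi_\lambda(\cdot)$, truncated as needed to guarantee integrability --- and then recover the general case by monotone convergence as $\lambda\to0^+$. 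All integrals are finite by H\"older, since $Av\in L^p(G)$, $u\in L^{p'}(G)$, and $G$ is bounded.

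The engine of the argument is the maximum-principle hypothesis (c). First I would extract its companion \emph{lower} bound: because $A$ is linear, the resolvent $J_\lambda:=(I+\lambda A)^{-1}$ is linear and, by (a), an $L^1(G)$-contraction; applying (c) to $-f$ and using $J_\lambda(-f)=-J_\lambda f$ gives $\inf_G J_\lambda f\ge\min(\inf_G f,0)$, and taking $f\le0$ shows $J_\lambda$ is order-preserving. Thus the resolvent satisfies a two-sided maximum principle. From this I expect to derive, for $v\in\Dom(A)$, the integral level-set inequalities
\[
\int_{\{v>k\}}Av\,dx\ge0\quad(k\ge0),\qquad
\int_{\{v<k\}}Av\,dx\le0\quad(k\le0),
\]
which are the integrated form of the pointwise resolvent bounds.

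Granting the level-set inequalities, the assembly is a layer-cake computation. Because $\gamma(0)=0$ and $\gamma'\ge0$, one has the pointwise representation $\gamma(v(x))=\int_0^\infty\gamma'(k)\,\mathbf{1}_{\{v>k\}}(x)\,dk-\int_{-\infty}^0\gamma'(k)\,\mathbf{1}_{\{v<k\}}(x)\,dk$; multiplying by $Av$ and applying Tonelli--Fubini (justified since $Av\in L^1(G)$ and $\gamma(v)\in L^{p'}(G)$ after truncation) yields
\[
\int_G Av\,\gamma(v)\,dx=\int_0^\infty\gamma'(k)\Big(\int_{\{v>k\}}Av\,dx\Big)dk-\int_{-\infty}^0\gamma'(k)\Big(\int_{\{v<k\}}Av\,dx\Big)dk\ge0,
\]
each factor carrying the correct sign. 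Finally I would let $\lambda\to0^+$ with $\gamma=\beta_\lambda$: the bound $\eqref{mc-est}$ provides the domination needed to pass the limit on sublevel sets, and $\beta_\lambda(v)$ converges pointwise to the minimal section of $\partial\varphi(v)$, giving the inequality for that section; for an arbitrary measurable section one checks that the two sections differ only where $v$ attains a corner value of $\varphi$, so the integral is unchanged.

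The hard part is the second step --- rigorously passing from the pointwise $L^\infty$ maximum principle for the resolvent to the integral level-set inequalities for $Av$ itself. The obstruction is that the truncations $v\wedge k$ and $v\vee k$ need not belong to $\Dom(A)$, so one cannot simply test the equation against an indicator. This is precisely the technical core of \cite[Lemma~2]{BrezisStrauss73}, and I would handle it by approximating $\mathbf{1}_{\{v>k\}}$ by the Lipschitz cut-offs of $v$, using the resolvent identity $Av_\lambda=(v-v_\lambda)/\lambda$ with $v_\lambda=J_\lambda v\to v$ and $Av_\lambda\to Av$ in $L^1(G)$, and invoking the two-sided maximum principle and order preservation established above before letting the cut-off parameter tend to zero.
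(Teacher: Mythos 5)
You should first note that the paper itself does not prove Lemma~\ref{key-est}; it is quoted from \cite{BrezisStrauss73} (Lemma~2) and \cite{Showalter97} (Prop.~II.9.3), so your proposal has to be measured against that cited argument. The cited proof is shorter and more direct than your route: with $J_\lambda:=(I+\lambda A)^{-1}$ and $v_\lambda=J_\lambda v$, one uses the pointwise subgradient inequality $u(x)\bigl(v(x)-v_\lambda(x)\bigr)\ge\varphi(v(x))-\varphi(v_\lambda(x))$, integrates, and invokes the Jensen-type resolvent inequality $\int_G\varphi(v_\lambda)\,dx\le\int_G\varphi(v)\,dx$, which follows from (a) and (c) because these make $J_\lambda$ order preserving and a contraction on $L^1$ and on $L^\infty$; dividing by $\lambda$ and using $Av_\lambda=(v-v_\lambda)/\lambda=J_\lambda Av\to Av$ in $L^p$ against $u\in L^{p'}$ gives the conclusion. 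Your layer-cake decomposition is in the same spirit (the standard proof of the resolvent inequality itself reduces to the truncations $(\,\cdot\,-k)^+$), but you have pushed the entire difficulty into the unproved ``level-set inequalities'' $\int_{\{v>k\}}Av\,dx\ge0$: these are exactly the statement of the lemma for $\varphi=(\,\cdot\,-k)^+$, and your sketch of how to get them from (a), (c) never isolates the one estimate that does the work, namely $\int_G(J_\lambda f-k)^+\,dx\le\int_G(f-k)^+\,dx$ for $k\ge0$. So the ``hard part'' remains a black box.

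The genuine gap is your final step. Letting $\lambda\to0^+$ with $\gamma=\beta_\lambda$ yields the inequality only for the minimal section $\beta^0(v(\cdot))$ of $\partial\varphi(v(\cdot))$, or more generally for sections of the form $g(v(\cdot))$ with $g$ a fixed monotone selection. Your claim that an arbitrary measurable section $u$ gives the same integral because the two sections ``differ only where $v$ attains a corner value of $\varphi$'' is false for the abstract $A$ of the lemma: nothing in (a), (c) forces $Av$ to vanish a.e.\ on a level set $\{v=c\}$ (that is a property of differential operators acting on $W^{2,1}$ functions, not of an abstract operator with a positivity-preserving contractive resolvent --- consider $A=I-K$ for a suitable positive kernel $K$), and on such a set $u$ genuinely varies within the interval $\partial\varphi(c)$, so $\int_{\{v=c\}}Av\,(u-\beta^0(c))\,dx$ has no sign. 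This is not a removable technicality for this paper: in the proof of Theorem~\ref{BS-T1} the lemma is applied with the section $\sigma=sgn_0^+(u_1-u_2+v_1-v_2)\in sgn^+(v_1-v_2)$, which on the set $\{v_1=v_2\}$ depends on $u_1-u_2$ and is therefore not a function of $v_1-v_2$ alone. The cited proof avoids the issue entirely because the subgradient inequality holds pointwise for every element of $\partial\varphi(v(x))$, so no selection ever needs to be approximated.
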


For an $x$-dependent family of such functions, we begin with the
following elementary but useful observation.
\begin{lemma} \label{integrable}
Assume that $\varphi(x,\xi)$ is a measurable-convex integrand with
each $\varphi(x,\xi) \in [0,+\infty]$ and $\varphi(x,0) = 0$. If $w: G
\to [0,+\infty]$ is measurable, then $\varphi(x,w(x))$ is measurable.
If $p \ge 1,\ v \in L^p(G),\ u \in L^{p'}(G),$ and $u(x) \in \partial
\varphi(x,v(x)) \text{ a.e. } x \in G$, then $\varphi(\cdot,v(\cdot))
\in L^1(G)$.
\end{lemma}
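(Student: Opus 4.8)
The plan is to treat the two assertions separately, relying throughout on one structural observation: since $\varphi(x,\xi) \ge 0 = \varphi(x,0)$ for every $\xi$, the point $0$ is a global minimizer of the convex function $\varphi(x,\cdot)$. By convexity this forces $\varphi(x,\cdot)$ to be nondecreasing on $[0,+\infty)$ and nonincreasing on $(-\infty,0]$, and it gives $0 \in \partial\varphi(x,0) = \bar{\beta}(x,0)$. This monotonicity on each half-line is exactly what makes the measurability argument elementary, even though the sections $\varphi(x,\cdot)$ need only be lower semicontinuous and may take the value $+\infty$.

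For the measurability assertion I would first reduce to simple functions. Approximate the measurable $w: G \to [0,+\infty]$ from below by a nondecreasing sequence of nonnegative simple functions $w_n \nearrow w$. For a simple function $w_n = \sum_k c_k \mathbf{1}_{E_k}$ with measurable sets $E_k$ and constants $c_k \ge 0$, the composition $x \mapsto \varphi(x,w_n(x)) = \sum_k \mathbf{1}_{E_k}(x)\,\varphi(x,c_k)$ is measurable, because each $x \mapsto \varphi(x,c_k)$ is measurable by the defining property of a measurable-convex integrand. It then remains to pass to the limit pointwise, and here lower semicontinuity enters through the fact that a nondecreasing lower semicontinuous function is left-continuous: writing $g = \varphi(x,\cdot)$, monotonicity gives $g(r_0^-)\le g(r_0)$ while lower semicontinuity gives $g(r_0)\le \liminf_{r\to r_0} g(r) = g(r_0^-)$, so $g(r_0)=g(r_0^-)$. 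Consequently $\varphi(x,w_n(x)) \nearrow \varphi(x,w(x))$ at each $x$, interpreting $\varphi(x,+\infty)$ as $\sup_r \varphi(x,r)$ on the set $\{w=+\infty\}$, and $\varphi(\cdot,w(\cdot))$ is measurable as a pointwise limit of measurable functions. For the signed argument $v$ needed in the second assertion, I would split $G$ into $\{v\ge 0\}$ and $\{v<0\}$ and apply this result to $v^+$ and to the reflected integrand $(x,s)\mapsto \varphi(x,-s)$ evaluated at $v^-$, using the nonincreasing branch on $(-\infty,0]$; this yields measurability of $\varphi(\cdot,v(\cdot))$.

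For the integrability assertion the subgradient relation does all the work. Since $u(x) \in \partial\varphi(x,v(x))$, the subgradient inequality $\varphi(x,t) \ge \varphi(x,v(x)) + u(x)\,(t-v(x))$ holds for all $t$; taking $t=0$ and using $\varphi(x,0)=0$ gives the pointwise bound $0 \le \varphi(x,v(x)) \le u(x)\,v(x)$ almost everywhere, nonnegativity being just $\varphi\ge 0$. Monotonicity of the graph together with $0\in\bar{\beta}(x,0)$ also shows $u(x)v(x)\ge 0$ a.e. By Hölder's inequality with $\tfrac{1}{p}+\tfrac{1}{p'}=1$ and $v\in L^p(G),\ u\in L^{p'}(G)$, the product $uv\in L^1(G)$ with $\int_G uv\,dx \le \|u\|_{L^{p'}}\|v\|_{L^p}$. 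Integrating the pointwise bound then yields $0\le \int_G \varphi(x,v(x))\,dx \le \|u\|_{L^{p'}}\|v\|_{L^p}<\infty$, so $\varphi(\cdot,v(\cdot))\in L^1(G)$.

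The only genuinely delicate step is the measurability claim, and specifically the justification that the monotone simple-function approximation converges to the correct pointwise limit; this is precisely where lower semicontinuity is used, via the left-continuity of the nondecreasing sections, and it is the step I expect to be the main obstacle. Once measurability is secured, the integrability bound is immediate from the $t=0$ subgradient inequality and Hölder, so the second half of the lemma reduces to essentially a one-line estimate.
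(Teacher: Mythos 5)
Your proof is correct, and the integrability half coincides exactly with the paper's argument: take $t=0$ in the subgradient inequality to get $0\le\varphi(x,v(x))\le u(x)v(x)$ and apply H\"older. Where you genuinely diverge is the measurability half. The paper composes $w$ with the Moreau--Yosida approximations $\varphi_\lambda(x,r)=\inf_t\{\tfrac{1}{2\lambda}|r-t|^2+\varphi(x,t)\}$ introduced in \eqref{moreau}: each $\varphi_\lambda$ is a Carath\'eodory function (measurable in $x$, continuous in $r$), so $x\mapsto\varphi_\lambda(x,w(x))$ is measurable, and the monotone convergence $\varphi_\lambda\nearrow\varphi$ as $\lambda\to 0^+$ passes measurability to the limit. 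You instead approximate the \emph{argument} $w$ by simple functions and pass to the limit using the observation that, because $\varphi(x,\cdot)\ge 0=\varphi(x,0)$, each section is nondecreasing on $[0,+\infty)$, and a nondecreasing lower semicontinuous function is left-continuous; your verification of that left-continuity (monotonicity gives $g(r_0^-)\le g(r_0)$, lsc gives the reverse since $\liminf_{r\to r_0}g(r)=g(r_0^-)$ for monotone $g$) is sound, as is the reflection trick for signed $v$. The trade-off: your argument is more elementary and self-contained, but it leans on the normalization $\varphi\ge 0=\varphi(x,0)$ (hence on $0$ being a global minimizer of every section), so it would not survive removal of that hypothesis; the paper's regularization argument works for an arbitrary measurable-convex integrand and reuses machinery that the paper needs anyway in the proof of Theorem~\ref{BS-T1}. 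Both proofs leave the value $\varphi(x,+\infty)$ on $\{w=+\infty\}$ to a convention, so no points lost there.
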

\begin{proof}
If $w$ is measurable then from the definition \eqref{moreau} it
follows that each Moreau-Yosida approximation
$x \mapsto \varphi_\lambda(x,w(x))$ is measurable, and these converge
monotonically to $\varphi(x,w(x))$ as $\lambda \to 0$, so $x \mapsto
\varphi(x,w(x))$ is measurable.  With $u,v$ as indicated, we have
$u(x) (0 - v(x)) \le \varphi(x,0) - \varphi(x,v(x))$, and this implies
the integrable upper bound in $0 \le \varphi(x,v(x)) \le u(x) v(x)$.
\end{proof}

\begin{proof}[{\it Proof of Theorem~\ref{BS-T1}}]  We follow the structure of the
proof of Theorem 1 of \cite{BrezisStrauss73}. (The latter is Theorem
II.9.2 of \cite{Showalter97}.). Each step is verified for the new
hypotheses.

Uniqueness of a solution is obtained from the
estimate \eqref{L1-est} and the injectivity of $A$. To verify
\eqref{L1-order}, let $u_1,v_1$ and $u_2,v_2$ be solutions of
\eqref{semi} corresponding to $f_1,\,f_2$. Subtract these two
equations and multiply by $\sigma = sgn_0^+(u_1-u_2 + v_1-v_2)$. Since
$\sigma \in sgn^+(v_1-v_2)$ (and $sgn^+$ does not depend on $x \in
G$), we can apply Lemma~\ref{key-est} to get $\int_G
A(v_1-v_2)\sigma\,dx \ge 0$. Also we have $\sigma \in sgn^+(u_1-u_2)$,
so the first of the estimates \eqref{L1-order} follows.  The second is
obtained similarly by using $sgn^-$. These imply \eqref{L1-est} and as
in \cite{BrezisStrauss73} that the range of $A + \bar{\beta}(\cdot)$ is
closed.

To prove the existence of an approximate solution of \eqref{semi}, let
$\eps > 0$ and $f_\eps \in L^1(G) \cap L^\infty(G)$ be fixed. (The
general case $f \in L^1(G)$ follows later). For each $\lambda > 0$
consider the approximating equation
\begin{equation}   \label{approx}
\eps v_\lambda + Av_\lambda + \bar{\beta}_\lambda(\cdot,v_\lambda) = f_\eps \,,
\end{equation}
where we have regularized $A$ by addition of $\eps I$. 
This is equivalent to
\ba
\label{eq:ss}
v_\lambda = (1 + \lambda \eps)^{-1} (I+\tfrac{\lambda}{1 + \lambda \eps}A)^{-1}
(\lambda f_\eps + (I + \lambda \bar{\beta})^{-1}v_\lambda).
\ea
The right side of \eqref{eq:ss} is a strict contraction in $L^1 \cap
L^\infty$, because it is a composition of two contractions followed by
scaling by a number $(1+\lambda\eps)^{-1} < 1$.

Thus \eqref{eq:ss} has a unique fixed point, $v_\lambda$, a solution
of \eqref{approx} which depends on $\eps > 0,\ \lambda > 0$. Use
Lemma~\ref{key-est} to test \eqref{approx} with $w = sgn_0(v_\lambda)
\in sgn(\bar{\beta}_\lambda(\cdot,v_\lambda))$ to obtain
$$ \eps \|v_\lambda\|_{L^1} + \|\bar{\beta}_\lambda(\cdot,v_\lambda)\|_{L^1} \le \|f_\eps\|_{L^1}.$$
Note that the function $sgn_0(\cdot)$ used to construct the test
function above is independent of $x$, so we can use Lemma~\ref{key-est}.
Moreover, in the norm $\|\cdot\|$ of $L^1 \cap L^\infty$, we have from
\eqref{eq:ss} that
$$ \|v_\lambda\| \le (1 + \lambda \eps)^{-1}(\lambda \|f_\eps\| + \|v_\lambda\|),$$
which implies $\|v_\lambda\| \le \tfrac{1}{\eps}\|f_\eps\|$.

It remains to obtain estimates on $\bar{\beta}_\lambda(\cdot,v_\lambda)$.
From \eqref{mc-est} and the preceding estimate, $|v_\lambda(x)| \le C$
for $C = \tfrac{1}{\eps}\|f_\eps\|$, so we get
\begin{equation}
|\bar{\beta}_\lambda(x,v_\lambda(x)| \le M_C(x),\ x \in G.
\end{equation}
Hence, the sequence $\{\bar{\beta}_\lambda(\cdot,v_\lambda)\}$ is bounded in
$L^2(G)$, and we follow steps identical to those in
\cite{BrezisStrauss73}.  First we obtain limits $v_\lambda \to
v_\eps,\ \bar{\beta}_\lambda(\cdot,v_\lambda) \to u_\eps$ as $\lambda \to
0$. Note here that we have strong limits in $L^2(G)$ due to the result
from \cite{CranPazy1969}. 
These limits satisfy
\begin{equation}   \label{approx0}
\eps v_\eps + Av_\eps + u_\eps = f_\eps,\ u_\eps \in \bar{\beta}(\cdot,v_\eps).
\end{equation}

Finally, for a general $f \in L^1(G)$, we approximate it with a sequence
in $L^1 \cap L^\infty$, $f_\eps \to f$ in $L^1(G)$, solve
\eqref{approx0} for each $\eps > 0$, and then we let $\eps \to 0$ to
get $v_\eps \to v$ and $u_\eps \to u$ in $L^1(G)$ which satisfy
\eqref{semi}.
\end{proof}

Next we prove crucial comparison and maximum estimates. 
\begin{corollary}  \label{BS0-comp}
If $u_1,v_1$ and $u_2,v_2$ are solutions corresponding to $f_1,f_2$
and $f_2 \geq f_1$, then $u_2 \geq u_1$ and $v_2 \geq v_1$. 
\end{corollary}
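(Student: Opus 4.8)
The ordering of $u$ is essentially free from the Theorem. Since $f_2 \ge f_1$ we have $(f_1-f_2)^+ = 0$, so the first inequality in \eqref{L1-order} gives $\|(u_1-u_2)^+\|_{L^1} \le \|(f_1-f_2)^+\|_{L^1} = 0$, whence $(u_1-u_2)^+ = 0$ a.e.\ and $u_2 \ge u_1$. The substance of the Corollary is therefore the ordering of $v$, and my plan is to obtain it not from the limit problem \eqref{semi} but one regularization level up, where a coercive zeroth-order term in $v$ is available.

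To see why the limit equation is inadequate, the naive attempt is to subtract the two copies of \eqref{semi}, giving $A(v_1-v_2) = (f_1-f_2)-(u_1-u_2)$, and to test against $\sigma = sgn_0^+(v_1-v_2) \in sgn^+(v_1-v_2)$. Lemma~\ref{key-est}, applied with $\varphi=(\cdot)^+$, yields $\int_G A(v_1-v_2)\sigma\,dx \ge 0$; meanwhile on $\{v_1>v_2\}$ monotonicity of $\bar{\beta}(x,\cdot)$ forces $u_1\ge u_2$, which combined with $u_1\le u_2$ gives $u_1=u_2$ there, so the tested identity collapses to $\int_G A(v_1-v_2)\sigma\,dx=0$. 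This is the main obstacle: there is no zeroth-order term in $v$ to dominate $(v_1-v_2)^+$, and since $v_1-v_2\in W^{1,1}_0(G)$ need not lie in $H^1$ one cannot simply test with $(v_1-v_2)^+$. When $\varphi(x,\cdot)$ has an affine piece the set $\{u_1=u_2,\ v_1>v_2\}$ is left uncontrolled, so monotonicity of $\bar\beta$ alone cannot finish. The missing coercivity in $v$ is precisely what the regularizing term $\eps I$ in the proof of Theorem~\ref{BS-T1} supplies.

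Concretely, I would first arrange the approximating data to preserve order, $f_{1,\eps}\le f_{2,\eps}$, and let $v_{i,\eps},u_{i,\eps}$ solve the regularized stationary problem \eqref{approx0}. Subtracting and testing against $\tau = sgn_0^+(v_{1,\eps}-v_{2,\eps})$ produces three nonnegative contributions on the left, namely $\eps\,\|(v_{1,\eps}-v_{2,\eps})^+\|_{L^1}$ from the term $\eps I$, the quantity $\int_G A(v_{1,\eps}-v_{2,\eps})\tau\,dx\ge 0$ from Lemma~\ref{key-est}, and $\int_G (u_{1,\eps}-u_{2,\eps})\tau\,dx\ge 0$ by monotonicity of $\bar\beta(x,\cdot)$ on $\{v_{1,\eps}>v_{2,\eps}\}$, set equal to the nonpositive right-hand side $\int_G (f_{1,\eps}-f_{2,\eps})\tau\,dx\le 0$. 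A sum of nonnegative terms equalling a nonpositive one forces each to vanish; in particular $\eps\,\|(v_{1,\eps}-v_{2,\eps})^+\|_{L^1}=0$, so $v_{1,\eps}\le v_{2,\eps}$. Letting $\eps\to 0$, the strong $L^1$-convergence $v_{i,\eps}\to v_i$ established in the proof of Theorem~\ref{BS-T1} transmits the inequality to the limit and yields $v_2\ge v_1$.

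Beyond the conceptual obstacle just described, the only remaining care is routine: I must verify that the approximations $f_{i,\eps}$ can be chosen to respect the ordering (for instance by truncating $f_1\le f_2$ symmetrically) and that the monotonicity sign of the $u$-term survives at the $\eps$-level, where $\bar\beta(x,\cdot)$ is still multivalued. With these in hand the $v$-comparison becomes a direct consequence of the $\eps$-regularized energy identity and the limit passage.
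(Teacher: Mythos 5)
Your proposal is correct and follows essentially the same route as the paper: the $u$-ordering is read off from the comparison estimate \eqref{L1-order}, and the $v$-ordering is established at the level of the regularized problem \eqref{approx0} (where the $\eps I$ term supplies the needed coercivity in $v$) and then passed to the $L^1$-limit. The paper's own proof is a one-line remark that the inequality ``holds for the respective approximations by \eqref{approx0}, and hence for their limits''; your sign-testing argument with $\tau = sgn_0^+(v_{1,\eps}-v_{2,\eps})$ and Lemma~\ref{key-est} is precisely the detail that remark leaves implicit.
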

The first inequality follows from \eqref{L1-order}. The second holds for the
respective approximations by \eqref{approx0}, and hence for their limits.

\begin{proposition} \label{BSmax}
If $f \in L^1(G) \cap L^\infty(G)$ and $k_1 \le 0 \le k_2$, then for
any measurable selections $b_1(x) \in \bar{\beta}(x,k_1),\, b_2(x) \in
\bar{\beta}(x,k_2)$ the solution $v_\eps,\,u_\eps$ of \eqref{approx0}
satisfies the estimates
\begin{subequations}
\begin{gather}
\eps  \|(v_\eps - k_2)^+\|_{L^1} + \|(u_\eps - b_2)^+\|_{L^1} \le
\|(f-b_2)^+\|_{L^1},  \label{pos-est-0}
\\
\eps  \|(k_1 - v_\eps)^+\|_{L^1} + \|(b_1 - u_\eps)^+\|_{L^1}
\le \|(b_1 - f)^+\|_{L^1}.  \label{neg-est-0}
\end{gather}
\end{subequations}
\end{proposition}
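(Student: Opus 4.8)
The plan is to derive both estimates by testing the regularized stationary equation \eqref{approx0} against a suitable sign-type function and invoking the angle condition of Lemma~\ref{key-est}, exactly in the spirit of the comparison argument for \eqref{L1-order}. I treat the first estimate \eqref{pos-est-0}; the second \eqref{neg-est-0} is symmetric. The device I would use is the combined test function
\[
\sigma(x) = sgn_0^+\big((u_\eps(x)-b_2(x)) + (v_\eps(x)-k_2)\big),
\]
which is measurable, single-valued, and takes values in $\{0,1\}$.

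Writing $P = u_\eps - b_2$ and $Q = v_\eps - k_2$, the key structural fact is the simultaneous-sign property: since $u_\eps(x)\in\bar{\beta}(x,v_\eps(x))$ and $b_2(x)\in\bar{\beta}(x,k_2)$, monotonicity of the graph $\bar{\beta}(x,\cdot)$ gives $P(x)Q(x)\ge 0$ a.e. As in the proof of \eqref{L1-order}, the case analysis on $\{P+Q>0\}$ and $\{P+Q\le 0\}$ then shows that $P$ and $Q$ can never have strictly opposite signs, so that $\sigma\in sgn^+(Q)$ and $\sigma\in sgn^+(P)$ at once. Consequently $(v_\eps-k_2)\sigma=(v_\eps-k_2)^+$ and $(u_\eps-b_2)\sigma=(u_\eps-b_2)^+$ pointwise.

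The crucial step is controlling the elliptic term. Since $\sigma\in sgn^+(v_\eps-k_2)=\partial\psi(v_\eps)$ for the convex function $\psi(r)=(r-k_2)^+$, and since $\psi(0)=(-k_2)^+=0$ precisely because $k_2\ge 0$, Lemma~\ref{key-est} applies with $\varphi=\psi$ and yields $\int_G (A v_\eps)\,\sigma\,dx \ge 0$. Here it is essential that $k_1,k_2$ are \emph{constants} and that $k_1\le 0\le k_2$: constancy makes the relevant convex function $x$-independent, so that Lemma~\ref{key-est} (which is stated for a single convex function) is available, and the sign conditions are exactly what force $\psi(0)=0$. The $x$-dependence of the problem is absorbed entirely into the selections $b_1,b_2$, which appear only on the right-hand side and never in the elliptic term. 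The integrability needed for Lemma~\ref{key-est} is routine, since $f\in L^1(G)\cap L^\infty(G)$ forces $v_\eps\in L^1\cap L^\infty$ and $u_\eps\in L^2$, whence $A v_\eps=f-\eps v_\eps-u_\eps\in L^2(G)$ on the bounded domain $G$, while $\sigma$ is bounded.

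With these ingredients I would multiply \eqref{approx0} by $\sigma$, integrate, and regroup around $k_2,b_2$ to obtain
\[
\eps\!\int_G (v_\eps-k_2)\sigma\,dx + \int_G (A v_\eps)\sigma\,dx + \int_G (u_\eps-b_2)\sigma\,dx = \int_G (f-b_2)\sigma\,dx - \eps\!\int_G k_2\,\sigma\,dx .
\]
The first and third integrals equal the two $L^1$-norms on the left of \eqref{pos-est-0}, the elliptic integral is nonnegative by the previous step, and the last term is $\le 0$ because $k_2\ge 0$ and $\sigma\ge 0$; bounding $(f-b_2)\sigma\le (f-b_2)^+$ (valid since $0\le\sigma\le 1$) then gives \eqref{pos-est-0}. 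For \eqref{neg-est-0} I would run the identical argument with $\sigma = sgn_0^+((b_1-u_\eps)+(k_1-v_\eps))$ and the convex function $(k_1-r)^+$, whose value at $0$ vanishes because $k_1\le 0$. The only genuine obstacle is establishing the simultaneous-sign property of $\sigma$ together with the vanishing of the convex integrand at $0$; once these are in place, the remainder is bookkeeping with positive parts.
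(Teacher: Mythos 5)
Your proposal is correct and follows essentially the same route as the paper: the paper also subtracts $\eps k_2$ and $b_2$ from \eqref{approx0}, multiplies by the very same test function $w=sgn_0^+\bigl((v_\eps-k_2)+(u_\eps-b_2)\bigr)\in sgn^+(v_\eps-k_2)\cap sgn^+(u_\eps-b_2)$, and invokes Lemma~\ref{key-est} (with the $x$-independent convex function $(r-k_2)^+$, vanishing at $0$ because $k_2\ge 0$) to discard the elliptic term. Your explicit justification of the simultaneous-sign property and of the integrability hypotheses simply spells out steps the paper leaves implicit.
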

\begin{proof}
Let $k_2 \ge 0$ and subtract $\eps k_2$ from the left side and
$b_2$ from both sides of \eqref{approx0} to get
\begin{equation*}
\eps (v_\eps - k_2) + Av_\eps + u_\eps - b_2
\le  f - b_2.
\end{equation*}
(Note that \eqref{mc-est} implies $b_2 \in L^2(G)$.)  Multiply by the
non-negative $w(x) = sgn_0^+(v_\eps(x)-k_2 + u_\eps(x) - b_2(x)) \in
sgn^+(v_\eps(x) - k_2) \cap sgn^+(u_\eps(x))-b_2(x))$ to obtain
\begin{equation*}
\eps (v_\eps(x)-k_2)^+ + Av_\eps(x)w(x) 
+ (u_\eps(x) - b_2(x))^+
\le  (f(x) - b_2(x))^+
\end{equation*}
and use Lemma~\ref{key-est} to integrate and get the
first estimate. The second is proved similarly.
\end{proof}
\begin{corollary} [Maximum estimate]  \label{BS0-max}
In the situation of Theorem~\ref{BS-T1} with $f \in L^1(G)$, assume
$0 \le k$ and that $b(x) \in
\bar{\beta}(x,k)$ is a corresponding measurable selection.

If $f(x) \le b(x)$ a.e. in $G$, then $v(x) \le k$ and $u(x) \le
b(x)$ a.e. in $G$.
\end{corollary}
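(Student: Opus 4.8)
The plan is to derive Corollary~\ref{BS0-max} from Proposition~\ref{BSmax} by passing to the limit as $\eps \to 0^+$, first handling the case of bounded data and then extending to general $f \in L^1(G)$ by approximation. The key observation is that Proposition~\ref{BSmax} already gives the desired pointwise bounds in integrated form at the level of the regularized problem \eqref{approx0}, so all that remains is to remove the regularization and the boundedness assumption on $f$.

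First I would treat the case $f \in L^1(G) \cap L^\infty(G)$. Since $f(x) \le b(x)$ a.e., the right side $\|(f-b)^+\|_{L^1}$ of the estimate \eqref{pos-est-0} (applied with $k_2 = k$ and $b_2 = b$) vanishes. Because both terms on the left side of \eqref{pos-est-0} are nonnegative, this forces $(v_\eps - k)^+ = 0$ and $(u_\eps - b)^+ = 0$ a.e.\ in $G$, that is, $v_\eps(x) \le k$ and $u_\eps(x) \le b(x)$ a.e., uniformly in $\eps > 0$. Letting $\eps \to 0^+$, I would invoke the convergence $v_\eps \to v$ and $u_\eps \to u$ in $L^1(G)$ established at the end of the proof of Theorem~\ref{BS-T1}; passing to a subsequence converging a.e., the inequalities $v(x) \le k$ and $u(x) \le b(x)$ are preserved in the limit.

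For general $f \in L^1(G)$ I would approximate $f$ by a sequence $f_n \in L^1(G) \cap L^\infty(G)$ with $f_n \to f$ in $L^1(G)$. The mild subtlety is that I must choose the approximants to respect the hypothesis $f \le b$ a.e.: replacing $f_n$ by $\min\{f_n, b\}$ (or, since $b \in L^2(G) \subset L^1_{\mathrm{loc}}$ need not be bounded, by $\min\{f_n, b, M\}$ with $M \to \infty$ and a diagonal argument) keeps the approximants bounded, keeps them below $b$, and retains $L^1$-convergence by dominated convergence. For each such $f_n$ the bounded-data case yields solutions $v_n \le k$, $u_n \le b$. By the contraction estimate \eqref{L1-est}, $u_n \to u$ in $L^1(G)$, and by property (b) of the operator $A$ one controls $v_n$ in $L^1(G)$ so that $v_n \to v$; passing again to an a.e.-convergent subsequence preserves both inequalities in the limit.

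The main obstacle I anticipate is the measurability and integrability bookkeeping for the selection $b$ and the clean construction of bounded approximants $f_n \le b$ when $b$ itself is only in $L^2(G)$ rather than $L^\infty(G)$; this is exactly where the truncation-plus-diagonal argument is needed, and one must confirm that the solution of the bounded-data problem corresponding to $\min\{f_n, b, M\}$ is the genuine solution to which Proposition~\ref{BSmax} applies. Everything else is a routine limit argument resting on the $L^1$-contraction \eqref{L1-est} and the a.e.\ convergence of subsequences.
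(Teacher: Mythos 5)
Your proposal is correct and follows essentially the same route as the paper: apply Proposition~\ref{BSmax} with $k_2=k$, $b_2=b$ so that the right-hand side of \eqref{pos-est-0} vanishes, conclude $v_\eps \le k$ and $u_\eps \le b$ a.e., and pass to the $L^1$-limits. The paper merely compresses your two limit stages into one by choosing the bounded approximants $f_\eps$ in the existence proof of Theorem~\ref{BS-T1} to already satisfy $f_\eps \le b$ (your truncation remark is exactly what that choice hides).
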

\begin{proof}
Choose the approximations $f_\eps$ to satisfy the same constraint as
$f$.  Then Proposition~\ref{BSmax} shows the approximating solutions
$v_\eps,\,u_\eps$ of \eqref{approx0} satisfy the desired estimates,
and the same then holds for their $L^1$-limits, $v$ and $u$.
\end{proof}

\begin{remark} Corollary~\ref{BS0-max} 
does not follow from the comparison principle~\ref{BS0-comp}, since
$k,b(x)$ do not need to be solutions of the boundary-value
problem. When $\bar{\beta}$ is independent of $x$, the selection $b(x)$
can be replaced by any constant of appropriate sign to obtain
$L^\infty$-estimates. For example, if $b \in \Rg(\bar{\beta})$ we choose
$k \in \Re$ with $b \in \bar{\beta}(k)$, while for $b > \Rg(\bar{\beta})$
the result is vacuously true.
\end{remark}

\subsection{The Evolution Equation}
\label{sec:evolution}

Now we consider the evolution partial differential equation
\eqref{bs00} with homogeneous Dirichlet boundary conditions. A
solution of \eqref{bs00} written in terms of $u(x,t) \in
\bar{\beta}(x,\chi(x,t))$ satisfies
\ba
\label{eq:bs00}
\tfrac{\partial u}{\partial t} + A \circ \bar{\beta}^{-1}(\cdot,u) \ni F, \ 0 < t < T,
\ea 
with the operator $A$ and monotone graphs $\bar{\beta}(x,\cdot)$ as
defined in Section~\ref{sec:stationary}.  We recall again the
modification $\beta \rightarrow \bar{\beta}$ needed for theory, and
that a general operator $A$ or the particular operator $A_1$ can be
used. In the latter case, \eqref{eq:bs00} corresponds to 
\eqref{bs00} with the maximal monotone extension $\bar{\beta}$ of ${\beta}$.

Now \eqref{eq:bs00} can be written as the abstract Cauchy problem
\eqref{ageneq} provided we identify $\aA$ and demonstrate its
properties required by Theorem~\ref{CL}.

The extended Brezis-Strauss Theorem~\ref{BS-T1} developed in
Section~\ref{sec:stationary} provides the construction of the
appropriate operator 
\ba
\aA = A \circ\bar{\beta}^{-1}(\cdot,\cdot)
\ea
in $L^1(G)$. 
Define the relation $\aA$ on $L^1(G)$ by $\aA(u) \ni f$ if $u \in
L^1(G), f \in L^1(G)$ and that for some $\chi \in \Dom(A)$,
\begin{equation*} 
A \chi = f \text{ and } u(x) \in \bar{\beta}(x,\chi(x))\,, \text{ a.e. } x \in G\,.
\end{equation*}
The Cauchy problem \eqref{ageneq} with this operator $\aA$ is equivalent to the abstract
problem which can be rewritten as
\begin{equation}  \label{geneq}
u'(t) + A \chi(t) = F(t),\ u(t) \in \bar{\beta}(\cdot,\chi(t)),
\ 0 < t < T,\ u(0) = u_0 \,.
\end{equation}

To show $\aA$ is m-accretive, we use results of
Section~\ref{sec:stationary}. The equation \eqref{semi} is equivalent
to $u + \aA(u) \ni f$, and Theorem~\ref{BS-T1} implies that the map $f
\mapsto u$ is a contraction defined on $L^1(G)$. Moreover, the same
holds with $A$ replaced by $\lambda A$ for any $\lambda > 0$, so $\aA$
is m-accretive in the Banach space $L^1(G)$. Thus Theorem~\ref{CL}
applies, and we have the following result.

\begin{proposition} \label{prop:evolution} 
In the situation of Theorem~\ref{BS-T1}, the corresponding
initial-value problem \eqref{geneq} is well-posed. That is, for each
$u_0 \in \overline{\Dom(\aA)}$ and $F(\cdot) \in L^1(0,T;L^1(G))$
there is a unique $C^0$~solution of \eqref{eq:bs00} with $u(0) = u_0$.
\end{proposition}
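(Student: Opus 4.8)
The plan is to verify that the operator $\aA = A\circ\bar{\beta}^{-1}(\cdot,\cdot)$ is m-accretive on the Banach space $X = L^1(G)$ and then to invoke Theorem~\ref{CL} directly. The equation \eqref{geneq} is precisely the abstract Cauchy problem \eqref{ageneq} for this $\aA$, so once m-accretivity is established the existence and uniqueness of a $C^0$-solution for every $u_0 \in \overline{\Dom(\aA)}$ and $F \in L^1(0,T;L^1(G))$ follows immediately from Theorem~\ref{CL}. The crucial observation is that, by the definition of $\aA$, the resolvent equation $u + \lambda\,\aA(u) \ni f$ is exactly the stationary problem $u + \lambda A v = f$ with $u(x) \in \bar{\beta}(x,v(x))$, which is the problem solved by the extended Brezis--Strauss Theorem~\ref{BS-T1}.

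First I would treat the case $\lambda = 1$. Theorem~\ref{BS-T1} asserts that for each $f \in L^1(G)$ there is a unique solution $v \in \Dom(A)$, $u \in L^1(G)$ of \eqref{semi}; this says $\Rg(I + \aA) = L^1(G)$. The contraction estimate \eqref{L1-est} gives $\|u_1 - u_2\|_{L^1} \le \|f_1 - f_2\|_{L^1}$, i.e. the resolvent $(I+\aA)^{-1}\colon f \mapsto u$ is a contraction, which is exactly accretivity of $\aA$ at $\lambda = 1$.

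The only remaining point is to upgrade these two facts from $\lambda = 1$ to every $\lambda > 0$, and here I would exploit that the hypotheses (a)--(c) of Theorem~\ref{BS-T1} are invariant under the scaling $A \mapsto \lambda A$. Indeed $(I + \mu(\lambda A))^{-1} = (I + (\mu\lambda)A)^{-1}$ is a contraction for all $\mu > 0$, which gives (a); the coercivity (b) holds with constant $\lambda c$; and the sub-maximum principle (c) is unchanged, since it only involves the resolvents $(I+(\mu\lambda)A)^{-1}$. The measurability/growth condition \eqref{mc-est} is a hypothesis on $\bar{\beta}$ alone and is untouched by rescaling $A$. Hence Theorem~\ref{BS-T1} applies verbatim with $\lambda A$ in place of $A$, yielding both $\Rg(I+\lambda\,\aA) = L^1(G)$ and the contractivity of $(I + \lambda\,\aA)^{-1}$ for every $\lambda > 0$. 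Together these give m-accretivity of $\aA$, and Theorem~\ref{CL} then finishes the proof.

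I expect the routine-but-essential step to be this scale-invariance check: one must confirm that replacing $A$ by $\lambda A$ preserves each of (a), (b), (c) and leaves \eqref{mc-est} intact, so that the full strength of Theorem~\ref{BS-T1} — not merely accretivity but the range condition — transfers to every $\lambda > 0$. The genuine mathematical content, namely existence for the stationary problem and the $L^1$-contraction, has already been secured in Theorem~\ref{BS-T1}; nothing new about the $x$-dependence of $\bar{\beta}$ or the structure of $A$ must be proved here. A minor point to state cleanly is the identification of $\overline{\Dom(\aA)}$, so that the initial-datum hypothesis $u_0 \in \overline{\Dom(\aA)}$ required by Theorem~\ref{CL} matches the conclusion of Proposition~\ref{prop:evolution}.
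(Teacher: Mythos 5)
Your proposal is correct and follows essentially the same route as the paper: the paper likewise identifies the resolvent equation $u+\aA(u)\ni f$ with the stationary problem \eqref{semi}, invokes Theorem~\ref{BS-T1} for the range condition and the $L^1$-contraction, notes that the same holds with $A$ replaced by $\lambda A$ for every $\lambda>0$, and concludes m-accretivity so that Theorem~\ref{CL} applies. Your explicit check that hypotheses (a)--(c) and \eqref{mc-est} are invariant under the rescaling $A\mapsto\lambda A$ is exactly the step the paper compresses into one sentence.
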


We continue now to derive estimates on the solution to \eqref{eq:bs00}
which help to determine whether the a-priori extension $\beta
\rightarrow \bar{\beta}$ limits the applicability of
Proposition~\ref{prop:evolution}, namely, whether the solution of
\eqref{eq:bs00} satisfies \eqref{bs00}. This is the case if we can
show that the solution satisfies \eqref{eq:satminmax}.

\begin{corollary} [Comparison principle]
\label{cor:10half}
If $u_1(t),v_1(t)$ and $u_2(t),v_2(t)$ are solutions of the
initial-value problem \eqref{geneq}, with the corresponding data
$u_1(0),F_1(t)$ and $u_2(0),F_2(t)$, then
\bas
\|(u_1(t)-u_2(t))^+\|_{L^1} \leq 
\|(u_1(0)-u_2(0))^+\|_{L^1} \\
\nonumber
+\int_0^t \|(F_1(s)-F_2(s))^+\|_{L^1} ds, \, 0\leq t \leq T,
\eas
and similar inequalities hold for $\|(u_1(t)-u_2(t))^-\|_{L^1}$
and $\|u_1(t)-u_2(t)\|_{L^1}$.
\end{corollary}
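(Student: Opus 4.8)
The plan is to transfer the stationary one-sided comparison \eqref{L1-order} to the time-dependent problem through the implicit time-discretization that defines the $C^0$-solution, rather than through any direct contraction argument. The essential observation is that a single backward-difference step in the definition of an $\varepsilon$-solution is itself a stationary problem of exactly the type solved in Theorem~\ref{BS-T1}: the relation $\frac{s_j-s_{j-1}}{t_j-t_{j-1}}+\aA(s_j)\ni F_j$ rearranges to $s_j+\lambda_j\aA(s_j)\ni s_{j-1}+\lambda_j F_j$ with $\lambda_j=t_j-t_{j-1}>0$. Since replacing $A$ by $\lambda_j A$ leaves the hypotheses (a)--(c) intact---this is precisely the replacement already invoked to conclude that $\aA$ is m-accretive---the estimate \eqref{L1-order} applies to this step with right-hand side $s_{j-1}+\lambda_j F_j$.

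First I would fix $\varepsilon>0$ and choose $\varepsilon$-solutions $s^1$ and $s^2$ approximating $u_1$ and $u_2$ over a \emph{common} discretization $\mathcal D$; a joint time grid is obtained by refinement, the one genuinely delicate point, addressed below. Applying the first inequality of \eqref{L1-order} to the two step problems at level $j$ and then using subadditivity of the positive part, $(a+b)^+\le a^++b^+$, gives the recursion
\begin{equation*}
\|(s^1_j-s^2_j)^+\|_{L^1}\le \|(s^1_{j-1}-s^2_{j-1})^+\|_{L^1}+\lambda_j\,\|(F^1_j-F^2_j)^+\|_{L^1}.
\end{equation*}
Iterating this from $j=1$ up to the index $n$ with $t\in(t_{n-1},t_n]$ telescopes to
\begin{equation*}
\|(s^1_n-s^2_n)^+\|_{L^1}\le \|(s^1_0-s^2_0)^+\|_{L^1}+\sum_{j=1}^{n}\lambda_j\,\|(F^1_j-F^2_j)^+\|_{L^1}.
\end{equation*}
Letting $\varepsilon\to 0$, the left side tends to $\|(u_1(t)-u_2(t))^+\|_{L^1}$ because $s^i\to u_i$ uniformly in $C([0,T];L^1(G))$ by Definition~\ref{def:C0} and both $w\mapsto w^+$ and $\|\cdot\|_{L^1}$ are continuous; the Riemann sum on the right converges to $\|(u_1(0)-u_2(0))^+\|_{L^1}+\int_0^t\|(F_1(s)-F_2(s))^+\|_{L^1}\,ds$, using $s^i_0\to u_i(0)$ and the data-approximation bound $\sum_j\int_{t_{j-1}}^{t_j}\|F_i(t)-F^i_j\|\,dt<\varepsilon$ together with the pointwise estimate $|(F^1_j-F^2_j)^+-(F_1-F_2)^+|\le |F^1_j-F_1|+|F^2_j-F_2|$. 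The negative-part inequality is identical using the second estimate of \eqref{L1-order}, and the full $L^1$ bound follows by addition, since $|w|=w^++w^-$.

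The \emph{main obstacle} is entirely in the limit passage: the step-by-step comparison requires the two approximate solutions to share one discretization, so I must govern, by a single $\varepsilon$, both the mesh size $t_j-t_{j-1}\le\varepsilon$ and the combined $L^1(0,T;L^1(G))$-errors of $F^1_j$ and $F^2_j$ relative to $F_1$ and $F_2$. Working on a common refinement and invoking the uniform convergence of $\varepsilon$-solutions to the $C^0$-solutions for each datum simultaneously resolves this, but the estimate must be assembled so that the error in the discrete forcing and the error in $s_0^i$ are both controlled uniformly as $\varepsilon\to0$.
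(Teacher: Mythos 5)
Your proposal is correct and follows exactly the route the paper takes: its proof of Corollary~\ref{cor:10half} is the one-line remark that the result ``follows immediately for the approximations \eqref{step} by the estimates \eqref{L1-order},'' and your argument is precisely that remark carried out in detail (discrete step as a stationary problem with $\lambda_j A$, subadditivity of $(\cdot)^+$, telescoping, and the limit passage via the $C^0$-solution definition). The common-discretization point you flag is a genuine detail the paper suppresses, and you resolve it correctly.
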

\begin{proof}
This follows immediately for the approximations \eqref{step} by the
estimates \eqref{L1-order}.
\end{proof} 

Corollary~\ref{cor:10half} and Corollary~\ref{BS0-max} yield bounds on
a solution as follows.
\begin{remark}
\label{rem:bounds}
Let 
\begin{subequations}
\label{eq:bounds}
\ba v_2 \in \Dom(A)\mathrm{\ with\ }Av_2 =F_2 \geq 0,\mathrm{\ and\ }
\\ u_0(x) \leq u_2(x) \in \bar{\beta}(x,v_2(x))\mathrm{\ in\ }L^1(G).
\ea
\end{subequations}
From Corollary~\ref{cor:10half} we find that if $F \leq
0$, then the solution of \eqref{geneq} satisfies
\ba
u(t) \leq u_2,\;\; \chi(t) \leq v_2,\;\; 0 \leq t \leq T.
\ea
We also find that if $F \equiv 0$, then 
\ba
\label{eq:uplus}
u_0\geq 0 \implies u(t)\geq 0.
\ea
\end{remark}

Similarly, we obtain a {\em maximum estimate} for the initial-value
problem for $\aA = A \circ\bar{\beta}^{-1}(\cdot,\cdot)$.
\begin{corollary} [Maximum estimate]
\label{EVmax}
If $F \le 0$,
and 
\begin{subequations}
\label{eq:binit}
\ba
0 \le k,\, b(x) \in \bar{\beta}(x,k)
\ea 
is a measurable selection, and 
\ba
\label{eq:uinit}
u_0(x) \le b(x) \,a.e., 
\ea 
\end{subequations}
then the $C^0$-solution of the Cauchy
problem \eqref{geneq} satisfies
\ba
u(x,t) \le b(x), \;\; \chi(x,t) \le k,\; a.e. x \in G,
\ea
for $0 \le t \le T$.
\end{corollary}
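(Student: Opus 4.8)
The plan is to establish the bound at the level of the backward-difference $\varepsilon$-solutions that generate the $C^0$-solution, and then pass to the limit. Fix $\varepsilon > 0$ and let $\mathcal D, s$ be an $\varepsilon$-solution as in Definition~\ref{def:C0}, with nodes $t_0 < \cdots < t_N$, step sizes $\tau_j = t_j - t_{j-1}$, and data $F_1,\dots,F_N$. Since $F \le 0$, I would choose the $F_j$ to be nonpositive --- for instance the averages $\tau_j^{-1}\int_{t_{j-1}}^{t_j} F(t)\,dt$ --- which is compatible with the requirement $\sum_j \int_{t_{j-1}}^{t_j}\|F - F_j\|\,dt < \varepsilon$. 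Writing the step values as $s_j \in \bar{\beta}(\cdot,\chi_j)$, the defining relation $\tfrac{s_j - s_{j-1}}{\tau_j} + \aA(s_j) \ni F_j$ unfolds, by the definition $\aA = A \circ \bar{\beta}^{-1}$, into the stationary problem $s_j + \tau_j A\chi_j = s_{j-1} + \tau_j F_j$ with $s_j(x) \in \bar{\beta}(x,\chi_j(x))$; this is exactly \eqref{semi} with $A$ replaced by $\tau_j A$.

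Next I would run an induction on $j$. The hypotheses (a)--(c) of Theorem~\ref{BS-T1} hold for $\tau_j A$ just as for $A$, as already noted in Section~\ref{sec:evolution}, so Corollary~\ref{BS0-max} applies to each discrete stationary problem. The base case is $s_0 = u_0 \le b$ a.e.\ by \eqref{eq:uinit}. Assuming $s_{j-1} \le b$ a.e., the right-hand side satisfies $s_{j-1} + \tau_j F_j \le b$ a.e.\ because $\tau_j > 0$ and $F_j \le 0$; since $0 \le k$ and $b(x) \in \bar{\beta}(x,k)$, Corollary~\ref{BS0-max} then yields both $\chi_j \le k$ and $s_j \le b$ a.e. Thus every step function $s$ of every $\varepsilon$-solution satisfies $s(t) \le b$ a.e., with associated $\chi_j \le k$ a.e., for all $t$.

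Finally I would pass to the limit $\varepsilon \to 0$. By Definition~\ref{def:C0} we have $s(t) \to u(t)$ in $L^1(G)$ uniformly in $t$; since $\{w \in L^1(G) : w \le b \text{ a.e.}\}$ is convex and strongly closed in $L^1(G)$, the bound $u(x,t) \le b(x)$ survives, giving the first inequality. The second inequality $\chi(x,t) \le k$ is the delicate point, and is the step I expect to be the main obstacle: the constraint variable $\chi$ is recovered from $u \in \bar{\beta}(\cdot,\chi)$ rather than being the primary unknown of the $C^0$-solution, and monotonicity of $\bar{\beta}$ alone forces $\chi \le k$ only where the inequality $u < b$ is strict --- a horizontal segment of $\bar{\beta}(x,\cdot)$ at height $b(x)$ would otherwise permit $\chi > k$. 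I would therefore obtain the $\chi$-bound exactly as the lower-variable bounds are obtained in Corollary~\ref{BS0-comp} and Remark~\ref{rem:bounds}: the discrete bound $\chi_j \le k$ holds for the approximations, and one transfers it to $\chi(t)$ through the convergence of the approximating constraint variables, using that a one-sided bound is preserved under $L^1$ limits. For the methane graphs of \eqref{bs00} this subtlety is in fact vacuous, since $\bar{\beta}(x,\cdot)$ has no horizontal segments in the region of interest, but the argument via the approximations covers the general $\bar{\beta}$ of Theorem~\ref{BS-T1}.
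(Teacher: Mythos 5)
Your proposal is correct and follows the paper's own route: the paper's proof is precisely the one-line observation that the bound holds for the backward-difference approximations \eqref{step} via Corollary~\ref{BS0-max} and passes to the $L^1$ limit, which is exactly the induction you carry out in detail. Your extra care about choosing nonpositive $F_j$ and about transferring the $\chi$-bound through the approximating constraint variables is a faithful elaboration of what the paper leaves implicit (and mirrors how the paper handles the $v$-bounds in Corollary~\ref{BS0-comp} and Remark~\ref{rem:bounds}), not a different argument.
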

\begin{proof}
This follows immediately for the approximations \eqref{step} by the
estimates of Corollary~\ref{BS0-max}.
\end{proof} 

It follows from Corollary~\ref{EVmax} that the solution of
\eqref{geneq} is completely independent of those values of $u \in
\bar{\beta}(x,\chi)$ with $u \ge b$ or $\chi \ge k$.  In other words,
we can extend ${\beta}(x,\cdot)$ to a maximal monotone graph
$\bar{\bar{\beta}}(x,\cdot)$ in any (monotone) way for $u \ge b,\,\chi
\ge k$.

\begin{remark}
\label{rem:bottomline}
If the initial data for the problem \eqref{bs00} can be shown to
satisfy \eqref{eq:binit} for some useful pair $k,b(x)$, then the
solution to \eqref{eq:bs00} remains bounded by the same pair. Thus it
does not matter how the graph $\beta$ was extended to $\bar{\beta}$
beyond $k,b$. We can conclude the well-posedness for the problem
\eqref{bs00} with the original $\beta(x,\cdot)$ for the initial data
satisfying \eqref{eq:binit}.
\end{remark}

The remaining difficulty is to identify whether we can find useful
bounds $b,k$ which correspond to physically meaningful solutions, in
particular those that yield solutions that satisfy
\eqref{eq:satminmax}. Examples shown in Section~\ref{example} address
this question.

\subsection{Handling nonconstant $\phi$ and nonhomogeneous boundary conditions}

The analysis given above was formulated for homogeneous boundary
conditions and for constant porosity coefficient set as in
\eqref{eq:phi-one}. 

\begin{remark}
\label{rem:phi-one}
One can treat nonconstant porosity coefficient as follows. If $\phi
\in L^\infty(G)$ and $\phi(x) >0$ for a.e. $x \in G$, then $u \in
\partial \varphi(x,v)$ is equivalent to $\phi(x)u \in \partial
\left(\phi(x) \varphi(x,v)\right)$ for $x \in G$, and the functions
$\phi(x) \varphi(x,v)$ and $\phi(x) \beta(x,v)$ have the same
respective properties as $\varphi(x,v)$ and $\beta(x,v) = \partial
\varphi(x,v)$.

The case of $\phi(x,t)$ is important e.g., since $\phi$ depends on the
pressure, but is considerably more difficult from analysis point of
view and will not be discussed here.
\end{remark}

Next, we discuss boundary conditions. For linear smooth problems the
extension of analysis to non-homogeneous Dirichlet conditions (needed
in applications) is straightforward. For \eqref{bs00} this is also
true, but shifting of boundary conditions affects various maximum and
comparison principles. We address the case of nonhomogeneous boundary
conditions in detail for completeness, and do not revisit analyses
formulated above.

%
Suppose we want to resolve the evolution equation
\begin{equation}  \label{non-hom-ev}
\tfrac{d}{dt} u(t) + A_1 v(t) = F(t),\ u(t) \in \bar{\beta}(\cdot,v(t)),
\end{equation}
in $L^1(G)$ with given initial value $u(0)$, where $A_1$ is the partial
differential operator above but with non-homogeneous boundary
conditions on $v(t)$, independent of $t$.  Let $v_0 \in W^{2,1}(G)$ be a smooth
function that satisfies those boundary conditions, that is, $v$ and
$v_0$ have the same trace on $\partial G$ and $A_1 v_0 \in
L^1(G)$. Choose $u_0 \in L^1(G)$ to satisfy $u_0(x) \in
\bar{\beta}(x,v_0(x))$ for $x \in G$. Then define the translates
\begin{eqnarray*}
\tilde \beta(x,\xi) = \bar{\beta}(x,v_0(x) + \xi) - u_0(x),\ \xi \in \Re,
\\
\tilde u(t) = u(t) - u_0,\ 
\tilde v(t) = v(t) - v_0,\ 
\tilde F(t) = F(t) - A v_0
\end{eqnarray*}
Each $\tilde \beta(x,\cdot)$ is maximal monotone,
$\tilde \beta(x,0) = \bar{\beta}(x,v_0(x)) - u_0(x) \ni 0$,
$\tilde v = 0$ on $\partial G$, and
\begin{equation}  \label{tr-non-hom-ev}
\tfrac{d}{dt} \tilde u(t) + A_1 \tilde v(t) = \tilde F(t),\
\tilde u(t) \in \tilde \beta(\cdot,\tilde v(t)).
\end{equation}
Conversely, if $\tilde u(t),\tilde v(t)$ is a solution of
\eqref{tr-non-hom-ev} with $\tilde v(t) \in \Dom(A_1)$, then $u(t),v(t)$
is a solution of \eqref{non-hom-ev} with the prescribed boundary
values $v(t)|_{\partial G} = v_0|_{\partial G}$.

To apply this observation to get estimates on solutions of
\eqref{non-hom-ev} from those we have on the equation
\eqref{tr-non-hom-ev} with homogeneous boundary conditions, it is
useful to note that we can always choose $v_0$ to have the same upper
or lower bounds as its trace on the boundary. For example, we could
choose $v_0$ to be a harmonic function with the prescribed boundary
values.

\section{Examples of where theory applies and where it does not}  \label{example}

In this Section we illustrate the well-posedness results from
Section~\ref{analysis} and their limitations. In particular, we
exhibit explicit examples and applications of the maximum and
comparison principles. This is the task that was outlined
after Remarks~\ref{rem:bounds} and~\ref{rem:bottomline}. The goal is
to use the comparison and maximum principles to verify
\eqref{eq:satminmax}. This may be possible by
putting appropriate restrictions on the data for some cases, and then
the extension $\beta \rightarrow \bar{\beta}$ does not change the
problem, and we have full well-posedness in those cases, with
solutions satisfying physically meaningful bounds
\eqref{eq:satminmax}.

\begin{figure}
\includegraphics[width=6.5cm]{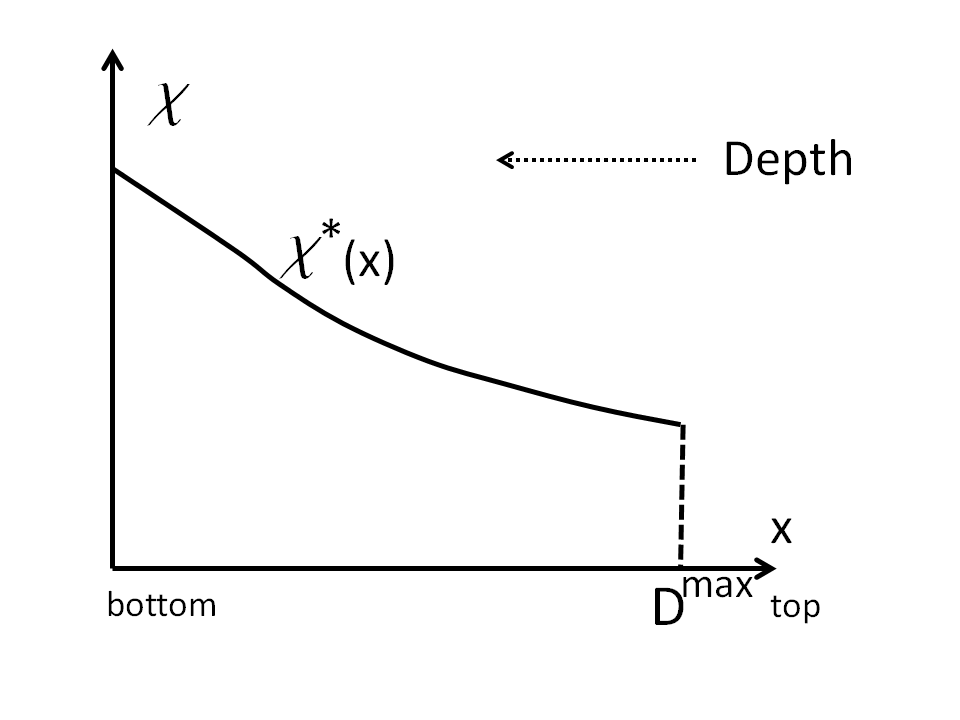}
\includegraphics[width=6.5cm]{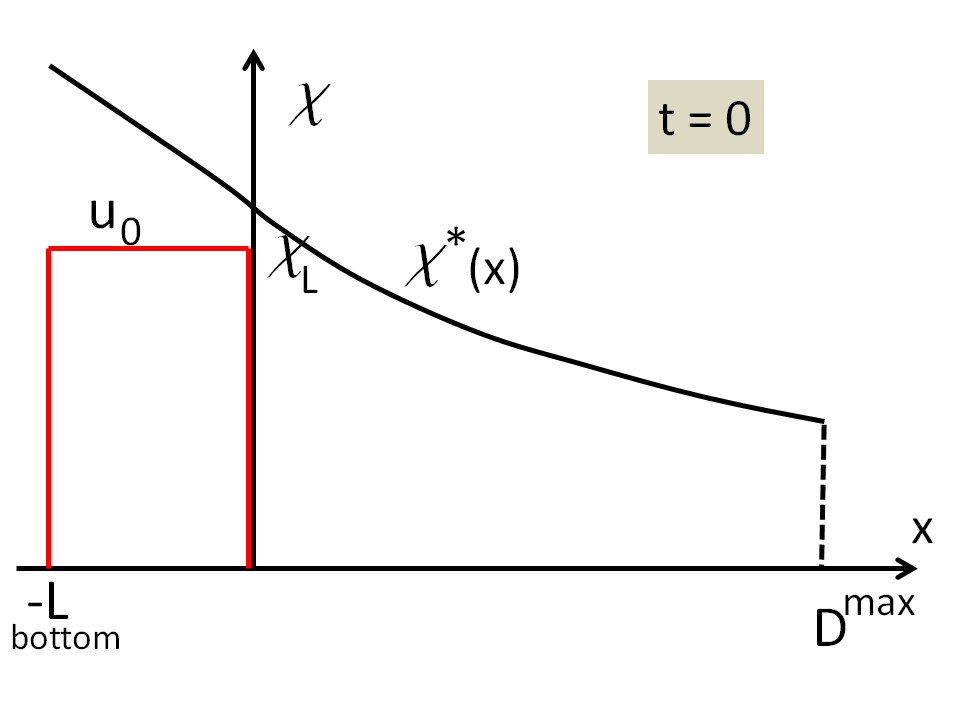}
\centerline{\hfill (a) \hfill (b) \hfill}

\includegraphics[width=6.5cm]{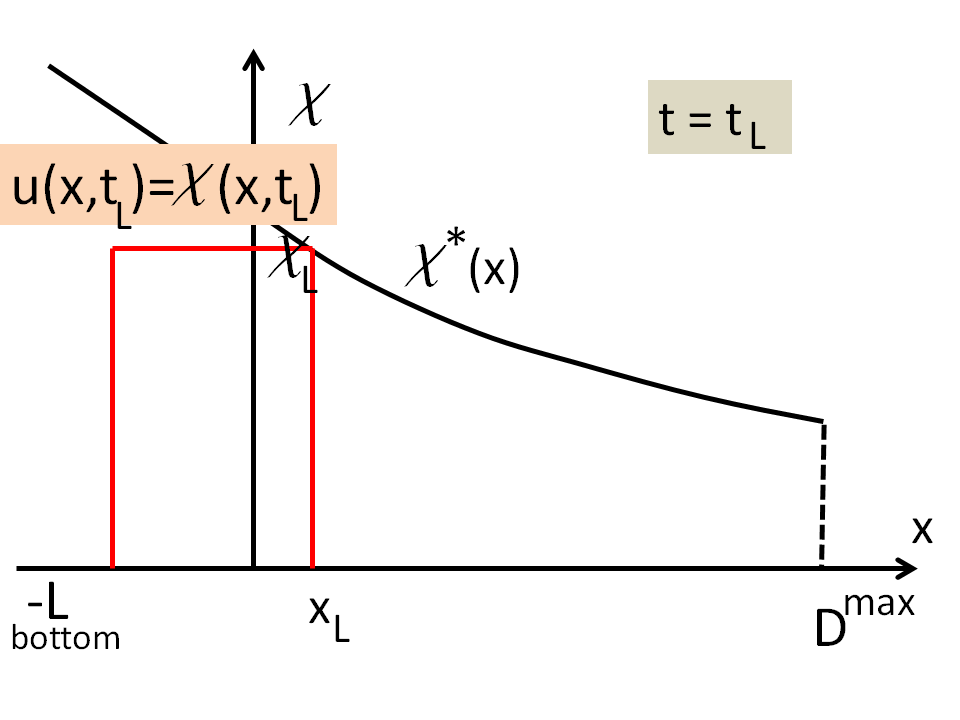}
\includegraphics[width=6.5cm]{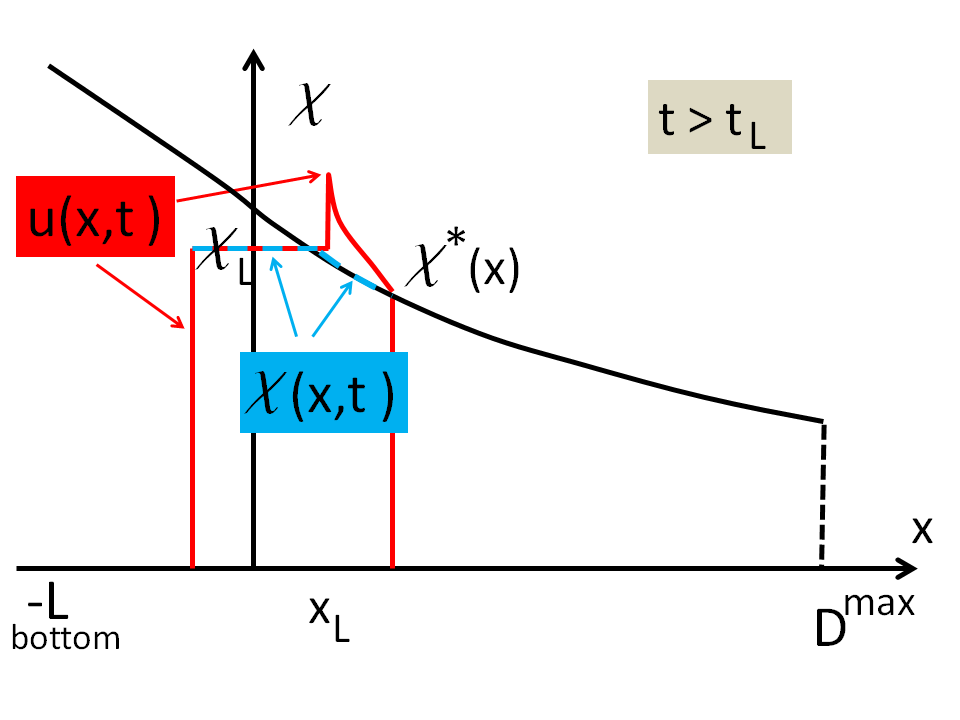}
\centerline{\hfill (c) \hfill (d) \hfill}
\caption{\label{fig:physical} (a) Example of $\chi^*$ for a typical
  reservoir as in \cite{LF08}. (b)-(d) Example in
  Section~\ref{sec:example} at $t=0$, $t=t_L$, and $t>t_L$,
  respectively.}
\end{figure}

We first discuss the applications of maximum and comparison principles
and then consider an analytical solution to a simplified case of
\eqref{bs00} with pure advection in $N=1$. Such a scenario arises when
the system changes rapidly away from hydrostatic equilibrium and when
diffusion is negligible compared to advection.  It also presents the
``worst case scenario'' from the point of view of analysis while it
simultaneously accounts for the largest possible accumulation of
hydrate. The scenario leads to hydrate saturations exceeding $1$ which
is unphysical and can be considered a ``blow-up''. We discuss whether
this blow-up can be anticipated or prevented by the maximum or comparison
estimates.

In all of the examples below we assume 
\ba
\label{eq:chidecreasing}
\chi^{*}(x) \text{ is a smooth non-increasing function in } G,
\ea
which is consistent with typical phase behavior in subsea sediments
\cite{LF08}; see Figure~\ref{fig:physical}(a). 
For simplicity we
consider $N=1$ and that the reservoir
\ba
\label{eq:G}
G \eqdef (0,D^{max})
\ea
has its bottom at $x=0$ and its top $x = D^{max}$ near the seafloor.
We also assume constant porosity \eqref{eq:phi-one} and homogeneous
boundary conditions.

\subsection{Application of maximum and comparison principles}

We consider two main examples of purely diffusive and purely advective
transport. Both are included in the theory.

We have two tools to obtain estimates on a solution of the stationary
problem \eqref{semi} which result eventually in those for the
evolution problem \eqref{eq:bs00}. The first is the comparison
principle \eqref{L1-order} of Theorem~\ref{BS-T1} which bounds one
solution by another solution. The difficulty here is to choose
solutions which provide useful estimates.
The second tool follows the maximum estimate in
Corollary~\ref{BS0-max} which provides bounds which are not
solutions. If $\beta$ is independent of $x$, the constants are
arbitrary, but in the $x$-dependent case the bound is a function
chosen from the {level set} of $\beta^{-1}(x,\cdot)$.

\subsubsection{Purely diffusive case}
Let $D_l^M >0,\; q=0$. 

The well-posedness result of Proposition~\ref{prop:evolution} applies
to the operator $A=-D_l^M \frac{d^2}{dx^2} $ with
domain
\bas
\Dom(A)=\{ v \in W^{2,1}(G): v(0)=v(D^{max})=0\},
\eas
and one can see that it satisfies the hypotheses of the
Theorem~\ref{BS-T1} on $L^1(G)$. Thus there is a unique $C^0$
solution to the problem \eqref{geneq}. 

We would like to use Remark~\ref{rem:bounds} with $v_2 = \chi^*$, but
the assumption \eqref{eq:bounds} requires that $\chi^* \in \Dom(A)$
and $A\chi^* \geq 0$; it allows for sink terms but no
sources. 
Consider the particular
case
\begin{multline}
\label{eq:diff}
\text{affine, decreasing }\chi^*,\text{ initial data }u_0 \leq R,
\\
\text{ with homogeneous boundary conditions, pure diffusion with sinks}.
\end{multline}
Let $\delta > 0$ and choose $v_2 \in \Dom(A)$ to be concave, $0 \le
v_2 \le \chi^*$, and $v_2(x) = \chi^*(x)$ for $x \in
(\delta,D^{max}-\delta)$.  Select $u_2 \in \beta(\cdot,v_2)$ by
$u_2(x) = R$ for $x \in (\delta,D^{max}-\delta)$ and $u_2 = v_2$
otherwise in $G$.  Thus, if $0 \le u_0 \le u_2$ and $F \le 0$, we
obtain from Remark~\ref{rem:bounds} that $0 \le u(t) \le u_2 \le R$
and $\chi(t) \le v_2$ in $G$, so we have a (physical) solution to
\eqref{bs00} that satisfies \eqref{eq:satminmax} for all time.
Note this case was obtained independently (in $H^{-1}(G)$) in \cite{GMPS14}. 
An extension of \eqref{eq:diff} is possible for a concave $\chi^*$,
which is, however, a nonphysical situation.

\subsubsection{Purely advective case} 
Now consider $D_l^M =0, q = const>0$. We develop an explicit analytical
solution for this case below and concentrate on the estimates first.
To apply the well-posedness result of Proposition~\ref{prop:evolution}
we see that the abstract formulation includes the operator
$A=q\frac{d}{dx}$ with domain
\ba
\label{eq:DA}
\Dom(A)=\{ v \in W^{1,1}(0,D^{max}): v(0)=0\},
\ea
which satisfies the hypotheses of Theorem~\ref{BS-T1} on
$L^1(G)$. Thus there is a unique $C^0$ solution to the problem; it is
given in Lemma~\ref{lem:example} below.

Now we would like to apply the maximum estimate to give a bound $b(x) \leq
R$ which would yield \eqref{eq:satminmax}.  Choose $k\geq 0$ and $b(x)
\in \beta(x,k)$. Then $b(x) \leq R$ implies $k \leq \chi^*(x)$ for
every $x \in (0,D^{max}]$. Thus $b(x)=k$ for all $x \in G$. But
the largest possible $k=\chi^*(D^{max}) \leq \chi^*(x)$ by
\eqref{eq:chidecreasing}, so we obtain
\ba
S(x,t)=0, (x,t) \in G \times (0,T).
\ea
Thus the problem with 
\begin{multline}
\label{eq:adv}
\text{non-increasing }\chi^*,\text{ initial data }u_0 \leq  \chi^*(D^{max}),
\\
\text{null boundary condition at 0, only advection, and no sinks/sources},
\end{multline}
is well posed with the original $\beta$. While physical, there is no
hydrate formation in this example with such data.

\medskip
To use the comparison principle to get an upper bound $b(x)=R$ for a
more realistic example with hydrate formation, i.e., $S(x,t)>0$ for
some $x,t$, we would like to choose $v_2 = \chi^*$. However, $\chi^* \not
\in \Dom(A)$ given by \eqref{eq:DA} because it does not satisfy the
boundary condition. However, if we truncate it linearly so
that
\bas
v_2(x) \eqdef \left\{ \begin{array}{ll}\chi^*(x),\, x\geq x_0>0,\\
\chi^*(x_0)\frac{x}{x_0},\; 0<x<x_0,
\end{array}\right.
\eas
then we can choose 
\bas
b(x) \eqdef \left\{ \begin{array}{ll}R,\, x\geq x_0>0,\\
\chi^*(x_0)\frac{x}{x_0},\; 0<x<x_0.
\end{array}\right.
\eas
These would give a good bound except that 
\bas
Av_2(x) =\left\{ \begin{array}{ll}q\partial_x \chi^*(x),\, x\geq x_0>0,
\\
q \chi^*(x_0),\; 0<x<x_0\,,
\end{array}\right.
\eas
and the assumption $Av_2 \geq 0$ holds only if $q\partial_x \chi^*\geq 0$.  This
requires, for the flow towards the ocean floor ($q>0$), to have
$\chi^*$ to be nondecreasing, which is unphysical, a clear contradiction
with \eqref{eq:chidecreasing}.

Alternatively, one can have the profile as shown in
Figure~\ref{fig:physical}, but with flux $q<0$ working towards the
bottom of the reservoir (which requires boundary condition to be
defined at $x=D^{max}$ and not at $x=0$). In summary, this case is
\begin{multline}
\label{eq:adv2}
\text{decreasing }\chi^*,\text{ initial data }u_0 \leq  R,
\\
\text{ with null boundary condition on right, 
 only advection, no sinks, } q <0. 
\end{multline}
This case is well-posed without an extension of $\beta$ but is rarely
seen in practice.
On the other hand, the case 
\begin{multline}
\label{eq:adv3}
\text{decreasing }\chi^*,\text{ initial data }u_0 \leq R,
\\
\text{ null boundary condition on left, 
 only advection, no sinks,} q >0,
\end{multline}
requires an ad-hoc extension $\beta\rightarrow \bar{\beta}$ for
well-posedness and may have unphysical solutions, since
\eqref{eq:satminmax} cannot be guaranteed with the maximum and
comparison principles derived in Section~\ref{analysis}. 

These findings are consistent with the analytical solution we derive
below for the case \eqref{eq:adv3} in which a ``blow-up'' occurs with
$S>1$ at some critical time $t_*$ in violation of
\eqref{eq:satminmax}. However, one can still extend $\beta \rightarrow
\bar{\beta}$ beyond some possible maximum value of $\chi^*,u$ which
depends on the data, and have well-posedness of the problem with
$\bar{\beta}$ producing solutions which are unphysical past $t_*$. We
interpret this as a case for which we have only local in time physical
solution, and simultaneously one in which the model itself becomes
unphysical.

\subsection{Analytical solution for advection case}
\label{sec:example}
We calculate the solution to \eqref{bs00} in the case \eqref{eq:adv3} with
constant flux input. We set $G=(-L,D^{max})$ for some $L>0$ and consider
\begin{subequations} \label{riemann}
\begin{eqnarray}
\label{eq:mh1d}
\partial_t{u}
+ \partial_x (q \chi)=0,\ x \in G,\ t>0
\end{eqnarray}
in which $u = (1-S) \chi + R S \in \beta(x,\chi)$ is determined by \eqref{eq:constraint}.
The initial condition for the problem is
\begin{eqnarray}
\label{eq:1dinit}
u(x,0) = \left\{ \begin{array}{cc}\chi_L,& x\leq 0,\\0,& x>0\end{array}\right.
= \chi_L H(-x),
\end{eqnarray}
\end{subequations}
where $H$ is the Heaviside function. This describes the physical
situation in which no methane is initially present in the reservoir
$(0,D^{max})$, but as time progresses, a uniform pulse of methane of a
fixed concentration and duration enters the reservoir at the left
boundary and gets transported towards the right (upper) boundary into
the reservoir.
\begin{remark}
\label{rem:blob}
The boundary condition at $x=0$ implicit in \eqref{eq:1dinit} is
inhomogeneous and thus not covered by the theory in
Section~\ref{analysis}, but it can be included by posing the problem
on $(-L,D^{max})$ with $u(-L,t) = 0$ as indicated, and setting $u_0(x)
=\chi_LH(-x)$.
\end{remark}

We assume that
$
\chi_L < \chi^*(0),
$
i.e., the incoming methane is all dissolved in the water. There is no
outflow boundary condition on the right end for this first order
equation \eqref{riemann}.

\begin{lemma}
\label{lem:example}
Assume 
\ba
\label{eq:max}
\chi^*(0) > \chi_L \geq \min\{\chi^{*}(x):\ x \in G\}.
\ea
Let $x_L$ be the unique last point where
\ba
\label{eq:xl}
\chi_L=\chi^*(x_L), 
\ea
and define the zones 
\begin{subequations}
\label{eq:zones}
\ba
G_-(t)&\equiv& \{x: 0 < x < \min (qt, x_L)\},\\ 
G_0(t) &\equiv& \{x: x_L < x < qt\},\\
G_+(t)&\equiv& \{x: x>qt\}.
\ea
\end{subequations}
Then the solution $\chi(x,t),S(x,t)$ to \eqref{riemann} is given by
\begin{subequations}
\ba
\label{eq:chi}
\chi(x,t) &=& \min(\chi_L,\chi^{*}(x)) H(qt-x),
\\
\label{eq:sdiscrete}
S(x,t) &=& -\frac{(t-t_x)^+ q \partial_x \chi^{*}(x)}
              {R-\chi^{*}(x)}, \; x \in G_0(t),
\\
S(x,t) &=&0, \; x \in G_-(t) \cup G_+(t),
\ea
\end{subequations}
where $t_x \equiv \frac{x}{q}$ is the breakthrough time for each
position $x$, that is, the first time at which methane is present at
$x$.
\end{lemma}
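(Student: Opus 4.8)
The plan is to build the solution explicitly by the method of characteristics, exploiting the two regimes dictated by the complementarity constraint \eqref{eq:constraint}, and then to identify the construction with the unique $C^0$-solution furnished by Proposition~\ref{prop:evolution}. Wherever $S=0$ the relation gives $u=\chi$, so \eqref{eq:mh1d} reduces to the linear transport equation $\partial_t\chi+q\,\partial_x\chi=0$, whose characteristics are the lines $x=qt+\text{const}$ moving to the right at speed $q$. Wherever $S>0$ the constraint pins $\chi=\chi^*(x)$, and with $u=\chi^*(x)+S\,(R-\chi^*(x))$ and $\chi^*$ time-independent, equation \eqref{eq:mh1d} collapses to the scalar ODE $\partial_t S=-\,q\,\partial_x\chi^*(x)/(R-\chi^*(x))$ along each vertical line $x=\text{const}$. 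Since $\chi^*$ is non-increasing by \eqref{eq:chidecreasing} and $\chi^*<R$ by \eqref{eq:defu}, the right-hand side is non-negative, so $S$ grows monotonically once hydrate is present.

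First I would propagate the initial datum \eqref{eq:1dinit} along characteristics. The leading edge of the dissolved methane travels at speed $q$, which produces the factor $H(qt-x)$ and identifies $G_+(t)$ as the not-yet-reached zone with $u=\chi=S=0$. Behind the front the incoming water carries concentration $\chi_L$, but $\chi$ cannot exceed the solubility $\chi^*(x)$; hence $\chi=\min(\chi_L,\chi^*(x))$, which gives \eqref{eq:chi} and separates the undersaturated zone $G_-(t)$ (where $\chi^*\ge\chi_L$, so $\chi=\chi_L$ and $S=0$) from the saturated zone $G_0(t)$ (where $\chi^*<\chi_L$, so $\chi=\chi^*(x)$ and hydrate must form); the dividing point is exactly the last solubility crossing $x_L$ of \eqref{eq:xl}. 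In $G_0(t)$ I would integrate the ODE above from the breakthrough time $t_x=x/q$ at which methane first reaches $x$, subject to $S(x,t_x)=0$, obtaining $S(x,t)=-\,(t-t_x)\,q\,\partial_x\chi^*(x)/(R-\chi^*(x))$, i.e.\ \eqref{eq:sdiscrete} after inserting the cutoff $(t-t_x)^+$.

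It then remains to check that the piecewise-defined pair is genuinely a solution across its two interfaces. The locus $x=qt$ is a moving discontinuity: ahead $u=\chi=0$, and just behind (where $t=t_x$, hence $S=0$) one has $\chi=\chi^*(qt)$, $u=\chi^*(qt)$, so the Rankine--Hugoniot speed $q\chi^*(qt)/\chi^*(qt)=q$ is consistent with the front, and likewise with $\chi=\chi_L$ while the front lies in $G_-(t)$. The locus $x=x_L$ is a standing interface across which $\chi$ is continuous (both sides equal $\chi_L$) while $u$ and $S$ jump; since the flux $q\chi$ is continuous there, a stationary discontinuity is admissible. With the classical equation verified in each open zone, the jump conditions verified on both interfaces, the initial condition \eqref{eq:1dinit} recovered as $t\to 0^+$, and the prescribed inflow at the left endpoint respected (cf.\ Remark~\ref{rem:blob}), the constructed triple $(\chi,S,u)$ is a distributional solution of \eqref{riemann}.

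The main obstacle is this final identification step, because distributional solutions of the first-order problem \eqref{riemann} are not unique without an admissibility criterion, so I must show the construction coincides with the specific $C^0$-solution selected by the $m$-accretive framework. I would resolve this by exhibiting the construction as the limit of the backward-difference $\varepsilon$-solutions underlying Definition~\ref{def:C0}: each implicit step solves a stationary problem of the form \eqref{semi} with $A=q\,\tfrac{d}{dx}$ on the domain \eqref{eq:DA}, which is a first-order linear ODE subject to the monotone constraint and is solved explicitly, and these discrete states are monotone and controlled by the comparison estimate \eqref{L1-order}. Passing to the limit and invoking the uniqueness in Theorem~\ref{BS-T1} and Proposition~\ref{prop:evolution} then forces the limit to equal both the explicit formula and the $C^0$-solution, completing the proof.
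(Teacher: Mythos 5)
Your derivation of the formulas is essentially the paper's own: the paper likewise propagates the inflow as a travelling wave to get $\chi(x,t)=\min(\chi_L,\chi^*(x))H(qt-x)$, splits the domain into the three zones at $x_L$ and $x=qt$, and in $G_0(t)$ reduces the conservation law to the pointwise ODE $S_t=-q\,\partial_x\chi^*(x)/(R-\chi^*(x))$, integrated from the breakthrough time $t_x$ with $S(x,t_x)=0$. The organizational difference is cosmetic rather than substantive: the paper differentiates the ansatz $u=(1-S)\chi^*(x)H(qt-x)+RS$ directly in the sense of distributions, using $\partial_t H=q\delta(qt-x)$ and the cancellation $S\,\delta(qt-x)=0$ to absorb the moving front into the computation, whereas you verify the classical equation in each open zone and then check the Rankine--Hugoniot condition at $x=qt$ and the stationary contact at $x=x_L$ separately. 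Your jump analysis is correct (the flux $q\chi$ is indeed continuous at $x_L$ since $\chi_L=\chi^*(x_L)$, so the growing jump in $u$ there is admissible), and it makes explicit something the delta-function calculus in the paper handles implicitly.

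Where you go beyond the paper is the final identification step: you correctly observe that a piecewise construction satisfying the PDE distributionally need not coincide with the $C^0$-solution selected by the m-accretive framework of Proposition~\ref{prop:evolution}, and you propose to close this by solving the implicit time-steps of the $\varepsilon$-solution explicitly and passing to the limit via \eqref{L1-order}. The paper does not address this point at all --- its proof is a formal derivation of the formulas, not a verification that they give the semigroup solution --- so you have identified a genuine gap in the published argument rather than created one. That said, your resolution is only a plan: solving the stationary resolvent problem $u+\lambda q\,v'=f$, $u\in\bar{\beta}(x,v)$ explicitly across the free boundary $x_L$ and controlling the limit is nontrivial and would need to be carried out (in particular one must track how the implicit scheme handles the growing jump of $u$ at $x_L$). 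For the purposes of matching the paper's level of rigor your argument is complete; as written, the last paragraph is a credible but unexecuted sketch.
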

\begin{proof}
As time $t$ increases, the methane enters the reservoir and much of it
is transported upwards towards the ocean floor located at $x=D^{max}$
and escapes there. However, some of the methane remains trapped in the
reservoir in the form of hydrate because of \eqref{eq:max}.
Since $\chi^*$ is monotone decreasing, there is a unique last point
$x_L \in \bar{G}$ for which \eqref{eq:xl} holds.
The methane which enters $G$ from the left travels up to $x_L$ as a
travelling wave $\chi(x,t)=\chi_L H(qt-x),\;\; 0< x < x_L$,
with speed $q$. 
The dissolved amount $\chi(x,t)$ does not exceed $\chi^*(x),x \leq
x_L$, thus we have $S(x,t)=0, u(x,t)=\chi(x,t),\,0<x<x_L$. 

In summary, for $tq \geq x$ or $t\geq t_x$, we have
$ \chi(x,t)=\min(\chi_L,\chi^{*}(x)) $ and there is no methane
$u(x,t)=\chi(x,t)=0$ ahead of the travelling wave where $tq<x$ ($t<t_x$). This
is concisely written as \eqref{eq:chi}.

One can thus distinguish three zones \eqref{eq:zones}.  
Note that $G_-(t)$ and $G_0(t)$ are empty for $qt < x_L$. While the
right boundaries of $G_-$ and $G_0$ travel with speed $q$, the left
boundary $x_L$ of $G_0$ is a {\em free boundary} determined by the
solution.

Now $u(x,t)$ must be partitioned between the flowing dissolved methane
advected towards the right and the stationary hydrate phase in $G_0$
with saturation $S$. Furthermore, $u(x,t)$ increases due to the
continuous supply of gas advected from the left. We have by
\eqref{eq:defu} and \eqref{eq:chi} that
\ba
\label{eq:u}
u(x,t) = (1-S)\chi^*(x)H(qt-x) + RS, \; x>x_L,
\ea
and we can formally differentiate in time to get
\ba \partial_t u = \partial_t S(x,t) (R - \chi^*(x)H(qt-x))+(1-S)
\chi^*(x)q \delta(qt-x). \ea
Here we have used Dirac $\delta$ for
$\partial_tH$. Differentiating \eqref{eq:chi} in $x$ we have
\bas  \partial_x \chi =  \partial _x \chi^*(x) H(qt-x) -  \chi^*(x)
\delta(qt-x). 
\eas 
Since {$S(x,t)\delta(qt-x)=0$}, substituting these into the
conservation law \eqref{eq:mh1d} yields
\ba
\label{eq:s}
S_t(R-\chi^*(x)H(qt-x))+q\partial_x \chi^*(x)H(qt-x)=0,\ x > x_L.
\ea
In $G_- \cup G_+$ clearly $S\equiv 0$ and $S_t=0$ since these regions
are undersaturated. 

It remains to calculate $S(x,t), x \in G_0(t)$. From
\eqref{eq:s} we have 
\bas
S_t=-\frac{q\partial_x \chi^*(x)}{(R-\chi^*(x))},\; x \in G_0(t),
\eas
and integrating in time from $t_x$ to $t$, with $S(x,t_x)= 0$  gives 
\eqref{eq:sdiscrete}. 
\end{proof}
\begin{remark}
Another way to calculate \eqref{eq:sdiscrete} is to notice that
\ba
\label{eq:omega0}
\partial_t u = - q \partial_x \chi^{*}(x), \; x \in G_0(t),
\ea
which is exactly \eqref{eq:mh1d} written in $G_0(t)$.  Integrating the
right hand side in time we have
\begin{eqnarray}
u(x,t) - u(x,t_x) = -(t-t_x)q \partial_x \chi^{*}(x), \; x \in G_0(t)
\end{eqnarray}
With \eqref{eq:defu} we now see 
\begin{multline}
(1-S(x,t))\chi^*(x)+RS(x,t)
- (1-S(x,t_x))\chi^*(x)+RS(x,t_x)
\\
=
-(t-t_x) q \partial_x \chi^{*}(x), \; x \in G_0(t).
\end{multline}
However, $S(x,t_x)=0$ thus we further simplify to obtain
\ba
S(x,t)(\chi^{*}(x)-R) =
 (t-t_x)q \partial_x \chi^{*}(x), \; x \in G_0(t).
\ea
which is the same as \eqref{eq:s}.
\end{remark}

From these, it follows that if the pulse of methane is not too large, i.e.,
\begin{equation} \label{safe}
\frac{L(-\partial_x \chi^*(x))}{R - \chi^*(x)} \leq S_0 < 1,\ x \ge x_L,
\end{equation}
then $S(x,t) < 1$ and the constraint \eqref{eq:satminmax} is satisfied.

Otherwise, from \eqref{eq:s} we see that $S$ reaches $1$ at $t=t_*$ given by 
\ba
\label{eq:ts}
t_* = t_x + \frac{\chi^{*}(x)-R}{q \partial_x \chi^{*}(x)}.
\ea
This time can be calculated for a given $\chi^{*}$ with known $R$ and
$q$. Note that by \eqref{eq:rdef}, \eqref{eq:defu}, $q>0$,
\eqref{eq:chidecreasing}, we have $t_*>t_x$.

At $t>t_*$, we have that $S>1$, i.e., \eqref{eq:satminmax} is
violated.  We have thus demonstrated an important observation which we
shall discuss in view of the maximum principles discussed earlier.
\begin{corollary}
\label{cor:unphysical}
The solution satisfies \eqref{eq:satminmax} if \eqref{safe}
holds. Otherwise, there is no physically meaningful solution to
\eqref{riemann} for $t>t_*$ where $t_*$ is given by \eqref{eq:ts}.
\end{corollary}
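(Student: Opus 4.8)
The statement simply collects the two alternatives already visible in the closed form \eqref{eq:sdiscrete}, so the plan is to read off a bound on $S$ in each regime and convert it into the bound \eqref{eq:satminmax} on $u$ through $u = (1-S)\chi + SR$. First I would dispose of the easy reduction: outside $G_0(t)$ one has $S=0$ and $u=\chi=\min(\chi_L,\chi^*(x))H(qt-x)\in[0,\chi^*(x)]\subset[0,R]$, so nothing is at issue there, while inside $G_0(t)$ the liquid is saturated, $\chi=\chi^*(x)$, giving $u=\chi^*(x)+S\,(R-\chi^*(x))$. Since $R-\chi^*(x)>0$ by \eqref{eq:defu}, the physical requirement $0\le u\le R$ is \emph{equivalent} to $0\le S\le 1$ on $G_0(t)$, and $S\ge0$ holds automatically because $-\partial_x\chi^*\ge0$, $q>0$. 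Thus the whole corollary reduces to controlling $S$ from above on $G_0(t)$.

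For the safe case I would use the one quantitative fact not made explicit in \eqref{eq:sdiscrete}, namely that the methane supply felt at a fixed location $x\ge x_L$ lasts only as long as the incoming pulse: the elapsed time since breakthrough is capped by the pulse width over the advection speed, $(t-t_x)^+\le L/q$. Substituting this into \eqref{eq:sdiscrete} gives
\[
S(x,t)\;\le\;\frac{(L/q)\,q\,(-\partial_x\chi^*(x))}{R-\chi^*(x)}\;=\;\frac{L(-\partial_x\chi^*(x))}{R-\chi^*(x)}\;\le\;S_0<1,
\]
the last two inequalities being precisely \eqref{safe}. Combined with $S\ge0$ and the reduction above, this yields \eqref{eq:satminmax} for all $t$.

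When \eqref{safe} fails there is an $x\ge x_L$ at which the middle quantity above reaches or exceeds $1$, meaning the pulse is long enough to drive $S$ to the value $1$ before it has finished passing. Here the accumulation is strictly monotone: by \eqref{eq:omega0}, $\partial_t u=-q\,\partial_x\chi^*(x)>0$ on $G_0(t)$, so $S$ grows linearly in $t$ as in \eqref{eq:s}; solving $S(x,t_*)=1$ in \eqref{eq:sdiscrete} reproduces exactly the critical time \eqref{eq:ts}, and the sign conditions ($\chi^*-R<0$, $\partial_x\chi^*<0$, $q>0$) make the added fraction positive so that $t_*>t_x$. For $t>t_*$ the same formula forces $S(x,t)>1$, i.e.\ $u(x,t)>R$, violating \eqref{eq:satminmax}. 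The last, and genuinely delicate, step is to upgrade ``this solution is unphysical'' to ``no physically meaningful solution exists'': I would invoke that the pair constructed in Lemma~\ref{lem:example} is the \emph{unique} solution selected by the conservation law together with the constraint $u\in\beta(x,\chi)$ --- equivalently, the unique $C^0$-solution provided by Proposition~\ref{prop:evolution}, whose values below the threshold $S=1$ do not depend on the maximal monotone extension $\bar\beta$. Since that unique solution has $S>1$ for $t>t_*$, no admissible solution can satisfy $S\le1$ beyond $t_*$. I expect this uniqueness/compatibility point to be the main obstacle: one must know that the direct characteristics-based construction of Lemma~\ref{lem:example} coincides with the admissible solution singled out by the accretive-operator theory of Section~\ref{analysis}, so that the blow-up cannot be evaded by passing to a different weak solution. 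By contrast, the quantitative blow-up itself is elementary, being just the linear growth recorded in \eqref{eq:omega0}.
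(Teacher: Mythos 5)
Your proof is correct and follows essentially the same route as the paper: both read the dichotomy directly off the explicit solution of Lemma~\ref{lem:example}, reducing $0\le u\le R$ to $0\le S\le1$ via \eqref{eq:u} and locating the critical time by solving $S=1$ in \eqref{eq:s}. The two points you flag as tacit --- the cap $(t-t_x)^+\le L/q$ coming from the finite pulse width, without which \eqref{safe} would not bound $S$ for all time, and the appeal to uniqueness needed to upgrade ``this solution violates \eqref{eq:satminmax}'' to ``no physically meaningful solution exists'' --- are indeed left implicit in the paper's discussion preceding the corollary, and your handling of both is sound.
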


\section{The Coupled Transport-Pressure System}  \label{sys}

Now we discuss the system consisting of the transport (saturation)
equation \eqref{bs00} solved for methane solubility $\chi$, and
methane content $u$. In what follows we assume that all variables can
be defined pointwise at every $x$. (More formally, we use their
regularizations as described below). We also assume that there is
nontrivial diffusion present in the problem, so elliptic regularity
applies.

We recall that from
\eqref{eq:defu} we have 
\ba
\label{eq:satu}
S= \frac{(u-\chi^*)^+}{R-\chi^*},
\ea
that is, $S$ is a Lipschitz function of $u$. (Obviously,
\eqref{eq:satu} is true at every $x \in G,t>0$).

The transport model \eqref{bs00} is coupled with the time-independent
pressure equation \eqref{darcy-e} solved for $p$ and $\q$. The system
\eqref{bs00} and \eqref{darcy-e} is fully coupled because the
permeability $\kappa(x,S(x,t))$ varies with the hydrate
saturation. One could consider the more general pressure equation
\eqref{darcy_c}, but this will not be done here.

The pressure equation \eqref{darcy-e} is elliptic in $p$ and is easily
resolved at any fixed time. Since $\kappa$ varies with $S$, it varies with
time, thus so do $p=p(t)$ and $\q=\q(t)$.  But the linear operator $A$
in the abstract formulation of the transport equation contains $\q$
and thus is time dependent. As such, the coupled case is not covered
directly by the theory developed in Section~\ref{analysis}.

We consider therefore a time-staggered coupling in the system, which
allows the treatment of transport and pressure components separately. 

\medskip
Let $0=t_0<t_1 < \ldots t_n < \ldots t_N=T$ be a sequence of discrete
time intervals. Let $u_0=u(t_0) \in \Dom(A)$. 

For $n=0,1,2,\ldots $ we proceed as follows.

(1) For each $n$, given $u(t_n)$, we can calculate $S(u(t_n))$ using
\eqref{eq:satu}.  Then we can calculate $\kappa(t_n) =
\kappa(S(u(t_n)))$ from \eqref{eq:ksat}. Next we solve the
quasi-static pressure equation \eqref{darcy-e} for $\q(t_n)$.

(2) Given $u(t_n),\, \q(t_n)$ we solve \eqref{bs00} for $u(t) \in
\beta(x,\chi(t)),\ t_n \le t \le t_{n+1}$.

(3) Return to step (1) with $n$ replaced by $n+1$. 

\medskip

This procedure depends on the exchange of data between \eqref{bs00}
and \eqref{darcy-e}. However the solution of the saturation equation
\eqref{bs00} is rather weak, and we only have $u(t) \in L^1(G)$ and
$\chi(t) \in \Dom(A_1)$. We alter (2) by regularizing \eqref{bs00}
with the Lipschitz $\beta_\lambda$ and $A_\eps = A + \eps I$, as in
the proofs of Section~\ref{analysis}, so the corresponding solutions
$u_{\lambda,\eps}$ are good approximations of the solution $u$ of
\eqref{bs00} when $\eps$ and $\lambda$ are small.

Since $S(u_{\lambda,\eps})$ is smooth, the coefficient
$\kappa_{\lambda,\eps} = \kappa(S(u_{\lambda,\eps}))$ is also smooth.
From the pressure equation \eqref{darcy-e} we know that the
corresponding $\q_{\lambda,\eps} \in H^1_{div}(G)$, but it will be
smoother for the approximating equation. Typically, we can appeal to
classical regularity results \cite{KindStam00,GilbargTrud83} to see
that if $\kappa_{\lambda,\eps} \in C^{0,\gamma}(G)$ with $0 < \gamma <
1$, then the solution of the elliptic equation \eqref{darcy-e}
satisfies $p_{\lambda,\eps}(t) \in W^{2,s}(G) \cap C^{1,\gamma}(G)$
for some $s \ge 1$, so the flux $\q_{\lambda,\eps}(t) \in H^{1,s}(G)
\cap C^{0,\gamma}(G)$. (Of course, in the $N=1$ case, the flux is
constant and determined by boundary conditions.)

Conversely, given $q_{\lambda,\eps}(t_n)$ we want to see if it is
smooth enough to construct $A(t_n)$ and solve \eqref{bs00} and close
the loop, so that the exchange of data in the iterative coupling is
meaningful. For $N=1$ it is obviously true. For $N>1$ we only have
$\q_{\lambda,\eps}(t_n) \in C^{0,\gamma}(G), \gamma<1$ which falls
somewhat short of the formally required condition $\q(t_n) \in
C^{1}(G)$. However, this shortcoming can be avoided by additional
minor regularization.

It is understood that $\kappa \ge \kappa_0 > 0$ in $G$. If this is
violated, then $\q \to \0$ and a geomechanics component is required
for the model.

\section{Conclusions}
\label{sec:conclusions}

In this paper we have extended previous work \cite{GMPS14} on a
diffusive model of methane transport with hydrate formation to the
case which includes advective transport coupled to an appropriate pressure
equation. The well-posedness of the model is formulated in
Proposition~\ref{prop:evolution} along with some comparison and
maximum principles that work for various practical cases. However, for
other cases, an extension of data required for well-posedness may
produce unphysical solutions, where hydrate saturation (volume
fraction) exceeds one.

This result was apparent from both the maximum and comparison
principles that we constructed, as well as from an analytical solution
to an initial-boundary-value problem that we constructed for the
purely advective case in $N=1$. Thus there arise the questions of the
relevence of the analysis and the range of application of model.

In fact, it is the oversimplifications of the model that lead to the
unphysical case where \eqref{eq:satminmax} is violated. When the entire
pore space is clogged by hydrate, as when $S=1$, the fluid ceases to
flow, thus $q=0$. Since a one-dimensional model does not allow it, it
is therefore producing unphysical solutions. Even before $S=1$, the
model coupled to the pressure equation must account for the decrease
of $\kappa=\kappa(x,S,p(x))$, thus increased pressure and fracturing.
Eventually, the transport and pressure equation model should be
extended to include a mechanics component with deformation or
fracturing. This is a subject of ongoing work.

Other extensions currently underway include accounting for nonconstant
salinity $\chi_l^S$, and temperature, and for more general
thermodynamics such as that for when water is not available for
hydrate formation.

\bibliographystyle{unsrt}
\bibliography{master,mpesz,showalter}
\end{document}